 \newtheorem{remark}{Remark}
 \newtheorem{lemma}[remark]{Lemma}
 \newtheorem{theorem}[remark]{Theorem}
 \newtheorem{corollary}[remark]{Corollary}
\title{The local metric dimension of strong product graphs}
\author{Gabriel A. Barrag\'{a}n-Ram\'{\i}rez and Juan A. Rodr\'{\i}guez-Vel\'{a}zquez
    \\
{\small Departament d'Enginyeria Inform\`atica i Matem\`atiques,}\\
{\small Universitat Rovira i Virgili,}  {\small Av. Pa\"{\i}sos
Catalans 26, 43007 Tarragona, Spain.} \\{\small
 gbrbcn\@@gmail.com, juanalberto.rodriguez\@@urv.cat}
}
\begin{document}
\maketitle

\begin{abstract}
A vertex $v\in V(G)$ is said to distinguish two vertices $x,y\in V(G)$ of a nontrivial connected graph $G$ if the distance from $v$ to $x$ is different from  the distance from $v$ to $y$.
 A set $S\subset V(G)$ is a \emph{local metric generator} for $G$ if every two adjacent vertices of $G$ are distinguished by some vertex
of $S$. A local metric generator with the minimum cardinality is called a \emph{local metric
basis} for $G$ and its cardinality, the \emph{local metric dimension} of $G$. 
 It is known that the problem of computing the
local metric dimension of a graph is NP-Complete.
In this paper we study the problem of finding exact values or bounds for the local metric dimension of strong product of graphs.
\end{abstract}

{\it Keywords:} Metric generator; metric dimension; local metric set; local metric dimension, strong product graph.

\section{Introduction}
A \textit{ metric generator} of a metric space $(X,d)$ is a set $S\subset X$ of points in the space  with the property that every point of $X$  is uniquely determined by the distances from the elements of $S$. 
The \textit{metric dimension}
$\dim(X)$ of $(X, d)$ is the smallest integer $t$ such that there is a metric generator  of cardinality $t$. A metric generator of 
cardinality $\dim(X)$ is called a\textit{ metric basis} of $X$.

The concept of metric dimension of a general metric space first appeared in 1953 in \cite{MR0268781},  but it attracted a little attention, except for the case of graphs. 
Given a simple and connected graph $G=(V,E)$, defined on the vertex set $V$ and the edge set $E$, we consider the function $d_G:V\times V\rightarrow \mathbb{N}\cup \{0\}$, where $d_G(x,y)$ is the length of a shortest path between $u$ and $v$ and $\mathbb{N}$ is the set of positive integers. It is readily seen that $(V,d_G)$ is a metric space. 

The notion of metric dimension of a graph was introduced by Slater in \cite{Slater1975}, where the metric generators were called \emph{locating sets}. Harary and Melter independently introduced the same concept in  \cite{Harary1976}, where metric generators were called \emph{resolving sets}. Applications of this invariant to the navigation of robots in networks are discussed in \cite{Khuller1996} and applications to chemistry in \cite{Johnson1993,Johnson1998}.  This invariant was studied further in a number
of other papers including, for instance \cite{Bailey2011, Caceres2007, Chartrand2000, Feng20121266, Guo2012raey, Haynes2006, Melter1984, Saenpholphat2004, Yero2011}.  Several variations of metric generators including resolving dominating sets \cite{Brigham2003}, independent resolving sets \cite{Chartrand2003}, local metric sets \cite{Okamoto2010}, strong resolving sets \cite{Sebo2004}, $k$-metric generators \cite{Estrada-Moreno2013}, simultaneous metric generators \cite{Ramirez2014}, etc. have since been introduced and studied.

In this article we are interested in the study  of local metric generators, also called local metric sets \cite{Okamoto2010}. A set $S$ of vertices
in a connected graph $G$ is a \emph{local metric generator} for $G$ if every two adjacent vertices of $G$ are distinguished by some vertex
of $S$, \textit{i.e.}, for every $u,v\in V(G)$ there exists $s\in S$ such that $d_G(u,s)\ne d_G(v,s)$. A local metric generator with the minimum cardinality is called a \emph{local metric
basis} for $G$ and its cardinality, the \emph{local metric dimension} of G, is  denoted by $\dim_l(G)$.
  The following main results were obtained in \cite{Okamoto2010}.

  \begin{theorem}{\rm \cite{Okamoto2010}} \label{The1Zhang} Let $G$ be a nontrivial connected graph of order $n$. Then
   $\dim_l(G)=n-1$ if and only if $G$ is  complete, and $\dim_l(G)=1$ if and only if $G$ is bipartite.
\end{theorem}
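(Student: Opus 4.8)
The plan is to establish the two biconditionals separately, each resting on one elementary fact about when a single vertex distinguishes the endpoints of an edge. Throughout I will use that, since $G$ is nontrivial and connected, it has at least one edge, and therefore $\dim_l(G)\ge 1$.

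\emph{The characterization $\dim_l(G)=1$ iff $G$ is bipartite.} The governing observation is that if $u$ and $v$ are adjacent then $|d_G(w,u)-d_G(w,v)|\le 1$ for every $w\in V(G)$, so $w$ distinguishes $u$ and $v$ if and only if $d_G(w,u)$ and $d_G(w,v)$ have opposite parities. If $G$ is bipartite, I fix any $s\in V(G)$: since all closed walks in $G$ have even length, all walks from $s$ to a given vertex have the same parity, hence for an edge $uv$ the walk $s\rightsquigarrow u\to v$ forces $d_G(s,u)$ and $d_G(s,v)$ to have opposite parity, so $\{s\}$ is a local metric generator; combined with $\dim_l(G)\ge 1$ this gives $\dim_l(G)=1$. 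Conversely, if $\{s\}$ is a local metric generator, then by the observation the parity of $d_G(s,\cdot)$ is a proper $2$-coloring of $G$, so $G$ is bipartite.

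\emph{The characterization $\dim_l(G)=n-1$ iff $G\cong K_n$.} Here the governing observation is that $d_G(w,w)=0<d_G(w,x)$ whenever $x\ne w$, so $w$ distinguishes every edge incident with it. If $G$ is not complete, I pick non-adjacent vertices $u,v$ and set $S=V(G)\setminus\{u,v\}$; every edge $xy$ satisfies $\{x,y\}\ne\{u,v\}$, hence has an endpoint in $S$, and that endpoint distinguishes $x$ and $y$, so $S$ is a local metric generator and $\dim_l(G)\le n-2$. If $G\cong K_n$, then every vertex lies at distance $1$ from every other vertex, so a vertex distinguishes a pair of (necessarily adjacent) vertices only when it belongs to that pair; thus a set is a local metric generator of $K_n$ exactly when it is a vertex cover, and since the minimum vertex cover of $K_n$ has $n-1$ vertices, $\dim_l(K_n)=n-1$.

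I do not expect a genuine obstacle: the argument is powered entirely by the two ``free'' distinguishing facts above, and the only step requiring an idea rather than a computation is producing a local metric generator of size $n-2$ in the non-complete case, which is handled by noting that deleting two non-adjacent vertices cannot remove the self-distinguishing witness of any surviving edge. The mild subtlety to keep in mind is that a local metric generator is only required to distinguish \emph{adjacent} pairs, and it is precisely this that makes both extremal values easy to pin down; one should also check consistency at $K_2$, which is simultaneously complete and bipartite, where indeed $n-1=1=\dim_l(K_2)$.
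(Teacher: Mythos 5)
The paper does not prove this statement: it is quoted verbatim from \cite{Okamoto2010} and used as a known result, so there is no in-paper argument to compare against. Your proof is correct and is essentially the standard one. Both halves rest on sound elementary facts: for the bipartite characterization, that a neighbour-pair is distinguished by $w$ exactly when $d_G(w,u)$ and $d_G(w,v)$ have opposite parity (since they differ by at most $1$), so a single vertex works iff the distance-parity classes form a proper $2$-colouring; for the completeness characterization, that every vertex distinguishes the edges incident with it, so deleting two non-adjacent vertices still leaves a local metric generator, while in $K_n$ a local metric generator must be a vertex cover. The $K_2$ consistency check is a nice touch. No gaps.
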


The clique number $\omega(G)$ of a graph $G$ is the order of a largest complete subgraph in $G$.

  \begin{theorem}{\rm \cite{Okamoto2010}} \label{The1Zhang2} Let $G$ be connected graph of order $n$. Then
   $\dim_l(G)=n-2$ if and only if $\omega(G)=n-1$.
\end{theorem}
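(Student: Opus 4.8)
The plan is to prove the two implications separately; the ``if'' part is a short direct argument, and the ``only if'' part I would attack through its contrapositive by a structural case analysis.

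\emph{If $\omega(G)=n-1$, then $\dim_l(G)=n-2$.} Fix a clique $K$ with $|K|=n-1$ and write $\{v\}=V(G)\setminus K$. Since $G$ is connected $v$ has a neighbour in $K$, and since $\omega(G)<n$ the graph is not complete, so $v$ also has a non-neighbour $u\in K$. For the upper bound I would check that $S:=K\setminus\{u\}$ is a local metric generator: the edges of $G$ are exactly the pairs inside $K$ together with the pairs $\{v,w\}$ with $w\in N(v)$, and in each case an endpoint lies in $S$ and distinguishes the pair (a vertex is at distance $0$ from itself and $1$ from each neighbour; note $w\neq u$ since $u\not\sim v$). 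Hence $\dim_l(G)\le n-2$. For the lower bound, let $S$ be any local metric generator. If $|K\setminus S|\ge 2$, then for distinct $x,y\in K\setminus S$ the edge $xy$ is distinguished by no vertex of $K\setminus\{x,y\}$ (all those distances equal $1$), so it must be distinguished by $v$; as $d_G(v,z)\in\{1,2\}$ for $z\in K$, exactly one of $x,y$ is adjacent to $v$. Since three vertices cannot satisfy ``exactly one of each pair adjacent to $v$'', this forces $|K\setminus S|\le 2$ and $v\in S$, whence $|S|\ge(n-3)+1=n-2$; and if $|K\setminus S|\le 1$ then $|S|\ge n-2$ trivially. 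Thus $\dim_l(G)=n-2$.

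\emph{If $\dim_l(G)=n-2$, then $\omega(G)=n-1$.} By Theorem \ref{The1Zhang}, $G$ is not complete, so $\omega(G)\le n-1$, and it suffices to prove the contrapositive: if $G$ is connected with $\omega(G)\le n-2$ then $\dim_l(G)\le n-3$. For an adjacent pair $xy$ let $D(xy)$ be the set of vertices distinguishing it; then $V(G)\setminus T$ is a local metric generator precisely when $D(xy)\not\subseteq T$ for every edge $xy$, and since $\{x,y\}\subseteq D(xy)$ this constrains only edges with both ends in $T$. Call a $3$-set $T$ \emph{good} if it meets these constraints. A $3$-set spanning no edge is automatically good, so if $G$ has three pairwise non-adjacent vertices we are done, and I may assume it does not. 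The key local fact I would establish is: \emph{if no good $3$-set exists, then every edge $ab$ with $D(ab)\neq\{a,b\}$ satisfies $N[a]\cup N[b]=V(G)$} (with $N[\,\cdot\,]$ the closed neighbourhood). Indeed, a common non-neighbour $c$ of $a,b$ makes $\{a,b,c\}$ a one-edge triple, hence bad, so $D(ab)\subseteq\{a,b,c\}$, and since $D(ab)\supsetneq\{a,b\}$ we get $D(ab)=\{a,b,c\}$; then $c$ is the only common non-neighbour of $a,b$ (a second one $c'$ would make $\{a,b,c'\}$ bad too, forcing $D(ab)\subseteq\{a,b,c'\}$), and no vertex $w\notin\{a,b,c\}$ can be adjacent to exactly one of $a,b$ (it would distinguish them), so every such $w$ is adjacent to both; hence $N[a]=N[b]=V(G)\setminus\{c\}$, contradicting $D(ab)=\{a,b,c\}$.

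Assuming no good $3$-set exists, I would pick non-adjacent $u,v$ and set $A=N(u)\setminus N[v]$, $B=N(v)\setminus N[u]$, $X=N(u)\cap N(v)$, so $V(G)=\{u,v\}\sqcup A\sqcup B\sqcup X$. Since there is no independent triple, $A$ and $B$ are cliques. Applying the key fact to the edges $ux,vx$ for $x\in X$ (each of $u,x$ and $v,x$ being a non-twin pair, as $x\sim v$ but $u\not\sim v$, and $x\sim u$ but $v\not\sim u$) shows every $x\in X$ is adjacent to all of $A\cup B\cup\{u,v\}$; and if some $x,x'\in X$ were non-adjacent, $\{u,x,x'\}$ would be good, because then $d_G(v,u)=2$ while $d_G(v,x)=d_G(v,x')=1$, so $v$ distinguishes both $ux$ and $ux'$. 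Hence $X$ is a clique of universal vertices. Then $\{u\}\cup A\cup X$ and $\{v\}\cup B\cup X$ are cliques, and comparing their orders with $\omega(G)\le n-2=|A|+|B|+|X|$ forces $A\neq\varnothing$ and $B\neq\varnothing$. Finally, the $A$--$B$ adjacency is all-or-nothing: if $a\in A$ and $b\in B$ were non-adjacent, then analysing the one-edge triples $\{u,a,b\}$ and $\{v,b',a\}$ (a neighbour of $a$ in $B$, respectively of $b'$ in $A$, would be a fourth distinguisher, making the triple good) shows first that $a$ has no neighbour in $B$, and then that no vertex of $B$ has a neighbour in $A$, so there is no $A$--$B$ edge at all.

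Two extremal configurations remain, each of which I would refute directly. If all $A$--$B$ edges are present, then $A\cup B\cup X$ is a clique of order $n-2$, and $\{u,v,x_0\}$ with $x_0\in X$ (or, when $X=\varnothing$, $\{u,v,r\}$ with $r\in A$) is a good $3$-set. If there is no $A$--$B$ edge, connectivity forces $X\neq\varnothing$, and $G$ is then obtained from two vertex-disjoint cliques on $|A|+1\ge 2$ and $|B|+1\ge 2$ vertices by adding $|X|\ge 1$ further vertices each adjacent to everything; for such a graph $A\cup B\cup(X\setminus\{x_0\})$ is easily checked to be a local metric generator of size $n-3$. Either way the assumed non-existence of a good $3$-set is contradicted, so $\dim_l(G)\le n-3$, completing the proof. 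I expect the hard part to be exactly this last stretch of the ``only if'' direction: isolating the local lemma about edges and their common non-neighbours, pushing it to force $X$ to consist of universal vertices and $A$--$B$ to be complete or empty, and then disposing of the two residual extremal graphs; the ``if'' direction and the reductions in ``only if'' are routine by comparison.
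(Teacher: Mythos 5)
This theorem is quoted in the paper from \cite{Okamoto2010} and is not proved there, so there is no in-paper argument to compare yours against; I have therefore checked your proposal on its own merits, and it is correct. The ``if'' direction is clean: the upper bound via $S=K\setminus\{u\}$ for a non-neighbour $u$ of the outside vertex $v$, and the lower bound via the parity obstruction (three vertices of $K\setminus S$ cannot pairwise have exactly one $v$-neighbour) both check out, including the needed facts that $d_G(v,z)\in\{1,2\}$ for $z\in K$ and that $v\in S$ whenever $|K\setminus S|=2$. In the ``only if'' direction, the reduction to ``every good $3$-set has all its internal edges externally distinguished,'' the key lemma (a non-twin edge with a common non-neighbour forces $D(ab)=\{a,b,c\}$, then uniqueness of $c$, then $N[a]=N[b]=V(G)\setminus\{c\}$, contradicting $c\in D(ab)$), the structure of the partition $\{u,v\}\sqcup A\sqcup B\sqcup X$ with $X$ universal and $A,B$ nonempty cliques, the all-or-nothing $A$--$B$ adjacency, and the two terminal configurations each producing a good $3$-set are all sound; I verified in particular the distance computations $d(b,u)=d(a,v)=2$ that make $\{u,v,x_0\}$ (resp. $\{u,v,r\}$) good. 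Your argument is thus a complete, self-contained reproof of a result the paper only cites; whether it coincides with the original proof in \cite{Okamoto2010} cannot be judged from this manuscript, but as written it establishes both implications, including the degenerate small-order cases, which are vacuous exactly when they need to be.
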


The local metric dimension of graphs has been previously studied in 
\cite{BarraganRamirez201427,Rodriguez-Velazquez-Fernau2013,MR3218546,Okamoto2010,Rodriguez-Velazquez2013LDimCorona,Rodriguez-Velazquez-et-al-2014}.
In particular, it was shown in  
\cite{Rodriguez-Velazquez-Fernau2013,MR3218546}
that the problem of computing the local metric  dimension is
NP-Complete.  This suggests
finding the strong metric dimension for special classes of graphs or obtaining good bounds on this
invariant.   In this paper we study the problem of finding exact values or sharp
bounds for the
local metric dimension of strong product graphs.

We begin by giving some basic concepts and notations. For two adjacent vertices $u$ and $v$ of $G=(V,E)$ we use the notation  $u\sim v$ and for two isomorphic graphs $G $ and $G'$ we use $G\cong G'$. For a  vertex
$v$ of $G$, $N_G(v)$ denotes the set of neighbors that $v$ has in $G$, {\it i.e.,}  $N_G(v)=\{u\in V:\; u\sim v\}$. The set $N_G(v)$ is called the \emph{open neighborhood of} $v$ in $G$  and $N_G[v]=N_G(v)\cup \{v\}$ is called the \emph{closed neighborhood of} $v$ in $G$.  

We will use the notation $K_n$, $K_{r,s}$, $C_n$, $N_n$ and $P_n$ for complete graphs,  complete bipartite graphs, cycle graphs, empty graphs and path graphs, respectively.

 The strong product of two graphs $G=(V_1,E_1)$ and $H=(V_2,E_2)$ is the graph $G\boxtimes H=(V,E)$, such that $V=V_1\times V_2$ and two vertices $(a,b),(c,d)\in V$ are adjacent in $G\boxtimes H$ if and only if 
\begin{itemize}
\item[] $a=c$ and $bd\in E_2$, or
\item[] $b=d$ and $ac\in E_1$, or
 \item[] $ac\in E_1$ and $bd\in E_2$.
\end{itemize}
We would point out  that the Cartesian product $G\square  H$ is a subgraph of $G\boxtimes H$ and for complete graphs  $K_r\boxtimes K_s=K_{rs}$.  

One of our tools will be a well-known result, which states the relationship between the vertex distances in $G\boxtimes H$ and the vertex distances in the factor graphs.
\begin{remark}{\rm \cite{Hammack2011}} Let $G$ and $H$ be two connected graphs. Then
$$d_{G\boxtimes H}((a,b),(c,d))=\max \{d_G(a,c), d_H(b,d)\}.$$ 
\end{remark}
For the remainder of the paper, definitions will be introduced whenever a concept is needed.

\section{General Bounds}
\label{Bounds}

We begin by giving   general  bounds for the local metric dimension of strong product graphs.

\begin{theorem}\label{generalUpperBound}
Let $G$ and $H$ be two connected graphs of order $n_1\ge 2$ and $n_2\ge 2$, respectively. Then $$3 \le \dim_l(G\boxtimes H)\le n_1\cdot \dim_l(H) + n_2\cdot \dim_l(G) - \dim_l(G)\cdot \dim_l(H).$$
\end{theorem}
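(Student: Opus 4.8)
The plan is to establish the two inequalities separately, since they are quite different in nature.

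For the lower bound $\dim_l(G\boxtimes H)\ge 3$, I would argue by contradiction. By Theorem~\ref{The1Zhang}, $\dim_l(G\boxtimes H)=1$ would force $G\boxtimes H$ to be bipartite; but $G$ and $H$ each contain an edge, so $G\boxtimes H$ contains a copy of $K_2\boxtimes K_2=K_4$, hence a triangle, and is not bipartite. So it suffices to rule out $\dim_l(G\boxtimes H)=2$. Suppose $S=\{(a_1,b_1),(a_2,b_2)\}$ were a local metric basis. Using the distance formula $d_{G\boxtimes H}((a,b),(c,d))=\max\{d_G(a,c),d_H(b,d)\}$, I would exhibit two adjacent vertices of $G\boxtimes H$ not distinguished by either element of $S$. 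The natural candidates come from the $K_4$ found above: pick an edge $uv$ in $G$ and an edge $xy$ in $H$, and consider vertices of the form $(u,x),(u,y),(v,x),(v,y)$, which are pairwise adjacent. The key observation is that the $\max$ in the distance formula is insensitive to small perturbations in one coordinate when the other coordinate dominates; by choosing the right pair among these four vertices relative to where $a_i$ and $b_i$ sit, one forces $\max\{d_G(a_i,\cdot),d_H(b_i,\cdot)\}$ to coincide on two adjacent vertices for $i=1,2$ simultaneously. This case analysis — organized by whether $d_G$ or $d_H$ achieves the maximum at each of the two generators — is the main technical obstacle, and it should be handled by a short exhaustive argument.

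For the upper bound I would construct an explicit local metric generator of the stated size. Let $A$ be a local metric basis of $G$ and $B$ a local metric basis of $H$, so $|A|=\dim_l(G)$ and $|B|=\dim_l(H)$. The candidate set is
$$S=\bigl(A\times V(H)\bigr)\cup\bigl(V(G)\times B\bigr),$$
whose cardinality is exactly $n_1\dim_l(H)+n_2\dim_l(G)-\dim_l(G)\dim_l(H)$ by inclusion–exclusion, since $(A\times V(H))\cap(V(G)\times B)=A\times B$. It remains to check that $S$ is a local metric generator. Take adjacent vertices $(a,b)\sim(c,d)$ in $G\boxtimes H$; by the adjacency definition, $a=c$ or $ac\in E_1$, and $b=d$ or $bd\in E_2$, and they are not both equalities. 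If $a\ne c$, then $a\sim c$ in $G$, so some $s\in A$ distinguishes $a,c$ in $G$, i.e. $d_G(a,s)\ne d_G(c,s)$; I then want a vertex $(s,t)\in A\times V(H)\subseteq S$ with $\max\{d_G(a,s),d_H(b,t)\}\ne\max\{d_G(c,s),d_H(b,t)\}$. Choosing $t=b$ reduces this to comparing $\max\{d_G(a,s),0\}=d_G(a,s)$ with $\max\{d_G(c,s),d_H(b,b)\}=d_G(c,s)$, which are unequal — so $(s,b)$ works. The case $b\ne d$ is symmetric, using $V(G)\times B$ and the vertex $(a,t)$ with $t\in B$ distinguishing $b$ from $d$. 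Since at least one of $a\ne c$, $b\ne d$ holds, every adjacent pair is distinguished, and $S$ is a local metric generator. The only subtlety to verify carefully is that $d_H(b,b)=0\le d_G(a,s)$ automatically, so the $\max$ collapses correctly; this is immediate.

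I expect the lower-bound case $\dim_l\ne 2$ to be the genuine difficulty, because it requires ruling out all two-element sets rather than constructing one good set, and the distance formula's $\max$ structure makes the bookkeeping of which coordinate dominates somewhat delicate; the upper bound, by contrast, is a clean inclusion–exclusion count plus a routine verification.
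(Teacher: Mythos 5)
Your upper bound is essentially the paper's proof: the same generator $S=(A\times V(H))\cup(V(G)\times B)$, the same inclusion--exclusion count, and the same case check. One slip to repair: the distance from $(s,b)$ to $(c,d)$ is $\max\{d_G(s,c),d_H(b,d)\}$, not $\max\{d_G(s,c),d_H(b,b)\}$, so when $b\ne d$ you are comparing $d_G(a,s)$ with $\max\{d_G(s,c),1\}$ and you additionally need $d_G(s,c)\ge 1$, i.e.\ $s\ne c$ (if $s=c$ the two distances can both equal $1$). This is harmless once you observe that every vertex of $S$ automatically distinguishes itself from each of its neighbours, so it suffices to treat adjacent pairs lying outside $S$, whence $c\notin A$ and $s\ne c$; the paper makes exactly this restriction.

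The genuine gap is in the lower bound. Ruling out dimension $1$ via Theorem~\ref{The1Zhang} and the triangle inside $K_2\boxtimes K_2$ is fine, but the step ruling out dimension $2$ is only a plan, and the plan as stated does not go through: it is false that for a copy of $K_4$ of the form $\{u,v\}\times\{x,y\}$ every two-element set must fail on some adjacent pair inside it. For example, in $P_4\boxtimes P_4$ with $V(P_4)=\{w_1,\dots,w_4\}$ and $V(P_4)=\{z_1,\dots,z_4\}$, take the $K_4$ on $\{w_3,w_4\}\times\{z_3,z_4\}$ and the two vertices $(w_1,z_2)$ and $(w_2,z_1)$: the four distance vectors are $(2,2)$, $(2,3)$, $(3,2)$, $(3,3)$, all distinct, so every pair inside that $K_4$ is distinguished. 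Hence the ``short exhaustive argument'' you defer cannot be completed for a $K_4$ chosen independently of the putative basis, and you give no rule for choosing it dependently. The missing idea is the paper's: anchor the clique at a basis vertex. Taking $(u_1,v_1)\in B$, $u^*\in N_G(u_1)$, $v^*\in N_H(v_1)$ and $W=\{(u^*,v_1),(u_1,v^*),(u^*,v^*)\}$, the vertex $(u_1,v_1)$ is at distance $1$ from all of $W$ and so distinguishes nothing there, while any single further generator assigns to the three mutually adjacent vertices of $W$ distances lying in $\{q,q+1\}$, so by the pigeonhole principle two of them coincide. This forces $|B|\ge 3$ with no case analysis on which coordinate realizes the maximum.
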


\begin{proof}
Let $V_1$ and $V_2$ be the set of vertices of $G$ and $H$, respectively. 
We claim that  $S=(V_1\times S_2)\cup (S_1\times V_2)$ is a local metric generator for $G\boxtimes H$, where $S_1$ and $S_2$ are local metric basis for $G$ and $H$, respectively.

 Let $(u_i,v_j), (u_k,v_l) \in V_1\times V_2-S$ be two adjacent vertices of $G\boxtimes H$. 
  If $i=k$, then $v_j$ and $v_l$ are adjacent in $H$ and there exists $b\in S_2$ such that 
  $d_{G\boxtimes H}((u_i,b),(u_i,v_j))=d_H(b,v_j)\ne d_H(b,v_l)=d_{G\boxtimes H}((u_i,b),(u_k,v_l))$. So, 
  $(u_i,v_j)$ and $(u_k,v_l)$ are distinguished by  $(u_i,b) \in (V_1\times S_2)\subset S$. Analogously, if $j=l$, then $u_i$ and $u_k$ are adjacent in $G$ and there exists $a\in S_1$ such that 
  $d_G(a,u_i)\ne d_G(a,u_k)$ and, as above,
 $(u_i,v_j)$ and $(u_k,v_l)$ are distinguished by  $(a,v_j)\in (S_1\times V_2)\subset S$. Finally, if 
 $u_iu_k\in E_1$ and $v_jv_l\in E_2$, then for any  
 $a\in S_1$ such that 
  $d_G(a,u_i)\ne d_G(a,u_k)$ we have 
  $$d_{G\boxtimes H}((u_i,v_j),(a,v_j)) = d_{G}(u_i,a) \ne  d_{G}(u_k,a) =\max\{d_{G}(u_k,a),1\} =d_{G\boxtimes H}((a,v_j),(u_k,v_l)).$$
Thus, $(u_i,v_j)$ and $(u_k,v_l)$ are distinguished by  $(a,v_j)\in S_1\times V_2\subset S$. Then we conclude that  $S$ is a local metric generator for $G\boxtimes H$ and, as a consequence, 
$\dim_l(G\boxtimes H)\le |S|= n_1\cdot \dim_l(H) + n_2\cdot \dim_l(G) - \dim_l(G)\cdot \dim_l(H)$.

To prove the lower bound, let $B$ be  a local metric basis 
of $G\boxtimes H$. Given   $(u_1,v_1)\in B$,
chose $u^*\in N_G(u_1)$, $v^*\in N_H(v_1)$ and
define $$W=\{(u^*,v_1), (u_1,v^*), (u^*,v^*)\}.$$
Since $(u_1,v_1)$ is not able to distinguish any pair  of adjacent vertices in $W$, there exists
$(u_2,v_2)\in B - \{(u_1,v_1)\}$.
Let $$q=\min_{(a,b)\in W}\{d_{G\boxtimes H}((u_2,v_2),(a,b))\}.$$
Now, as $d_{G\boxtimes H}((a,b),(u_2,v_2))\in \{q, q+1\}$ for every
$(a,b)\in W$, by  Dirichlet's box principle, there are two vertices
$(x_1,y_1),(x_2,y_2)\in W$ such that $$d_{G\boxtimes H}((u_2,v_2),(x_1,y_1))= d_{G\boxtimes H}((u_2,v_2),(x_2,y_2)).$$ Hence,
$B-\{(u_1,v_1),(u_2,v_2)\}\ne \emptyset$, 
and the result follows.
\end{proof}

Since $K_{n_1}\boxtimes K_{n_2} \cong K_{n_1\cdot n_2}$ and for any complete graph $K_n$, $\dim_l(K_n)=n-1$, we deduce
$$
\dim_l(K_{n_1}\boxtimes K_{n_2})  = n_1\cdot n_2 - 1  = n_1\cdot \dim_l(K_{n_2}) + n_2\cdot \dim_l(K_{n_1}) - \dim_l(K_{n_1})\cdot \dim_l(K_{n_2}).
$$

Therefore,  the upper bound is tight. Examples of non-complete graphs, where the upper bound is attained, can be derived from Theorem \ref{Generalizacompleto-por-G}.

In order to show that the lower bound is tight, consider two  paths $P_t$ and $P_{t'}$,  where $t'\le t\le 2t'-1 $,   $V(P_t)=\{u_1,u_2,\dots,u_t\}$ and  $u_i\sim u_{i+1}$, for every $i\in \{1,\dots , t-1\}$.
 Also, take $v_1,v_{t'}\in V(P_{t'})$ such that $d_{P_{t'}}(v_1,v_{t'})=t'-1$. 
It is not difficult to check that $\{(u_1,v_1),(u_{t'},v_{t'}),(u_t,v_1)\}$ is a local metric generator for $P_t\boxtimes P_{t'}$, so that  Theorem \ref{generalUpperBound} leads to  $\dim_l(P_t\boxtimes P_{t'})=3$.

\section{The Particular Case of Adjacency $k$-Resolved Graphs}
 
Now we will give some results involving the diameter or the radius of $G$. The \textit{eccentricity} $\epsilon(v)$ of a vertex $v$ in a connected graph $G$ is the maximum distance between $v$ and any other vertex $u$ of $H$. So, the \textit{diameter} of $G$ is defined as $$D(G)=\displaystyle\max_{v\in V(G)}\{\epsilon(v)\},$$ while the \textit{radius}  is defined as $$r(G)=\displaystyle\min_{v\in V(G)}\{\epsilon(v)\}.$$ 

Given  two vertices $x$ and $y$ in a connected graph $G=(V,E)$, the interval $I[x, y]$ between $x$ and $y$ is defined as the collection of all vertices which lie on some shortest $x-y$ path. Given a nonnegative integer $k$, we say that \emph{$G$ is adjacency $k$-resolved} if for every two adjacent vertices $x,y\in V$, there exists $w\in V$ such that 

\begin{itemize}
\item[]$d_G(y,w)\ge k$ and $x \in  I[y,w]$, or 

\item[] $d_G(x,w)\ge k$ and  $y \in  I[x,w]$. 
\end{itemize}

For instance, the path and the cycle graphs of order $n$ ($n\ge 2$) are adjacency $\left\lceil \frac{n}{2}\right\rceil$-resolved, the two-dimensional grid graphs $P_r\square P_t$ are adjacency $\left(\lceil\frac{r}{2}\rceil+\lceil\frac{t}{2}\rceil\right)$-resolved, and the hypercube graphs $Q_k$ are adjacency $k$-resolved.

\begin{theorem} \label{resolvedGraphs}
Let $H$ be an adjacency $k$-resolved graph of order $n_2$ and let $G$ be a non-trivial graph of diameter $D(G)<k$. Then
$\dim_l(G\boxtimes H)\le n_2\cdot \dim_l(G)$.
\end{theorem}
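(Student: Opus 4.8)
The plan is to construct an explicit local metric generator of size $n_2\cdot\dim_l(G)$ by taking $S = S_1 \times V_2$, where $S_1$ is a local metric basis of $G$ and $V_2 = V(H)$. So I would let $S_1$ be a local metric basis for $G$, set $S = S_1\times V_2$, and then verify that every pair of adjacent vertices $(u_i,v_j)\sim(u_k,v_l)$ in $G\boxtimes H$ is distinguished by some vertex of $S$. By the distance formula $d_{G\boxtimes H}((a,b),(c,d))=\max\{d_G(a,c),d_H(b,d)\}$, a vertex $(a,v_j)\in S$ distinguishes our pair provided $\max\{d_G(a,u_i),d_H(v_j,v_j)\}\ne\max\{d_G(a,u_k),d_H(v_j,v_l)\}$; the first term is just $d_G(a,u_i)$.

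I would split into the three adjacency cases from the definition of the strong product. If $j=l$ (so $u_i\sim u_k$ in $G$), pick $a\in S_1$ with $d_G(a,u_i)\ne d_G(a,u_k)$; since both of these distances are at most $D(G)<k$ and, crucially, $d_H(v_j,v_l)=0$, the vertex $(a,v_j)$ distinguishes the pair exactly as in the proof of Theorem~\ref{generalUpperBound}. If instead $u_i\sim u_k$ in $G$ and $v_j\sim v_l$ in $H$, the same choice of $a\in S_1$ works: we compare $d_G(a,u_i)$ with $\max\{d_G(a,u_k),1\}=d_G(a,u_k)$ (as $d_G(a,u_k)\ge 1$ when $a\ne u_k$, and the case $a=u_k$ is handled separately since then $d_G(a,u_i)=1\ne d_G(a,u_k)=0$), again mirroring Theorem~\ref{generalUpperBound}.

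The substantive case is $i=k$, i.e.\ $v_j\sim v_l$ in $H$ with the $G$-coordinate fixed — here the factor $G$ alone cannot help, and this is where the adjacency $k$-resolvedness of $H$ enters; this is the step I expect to be the main obstacle. Since $H$ is adjacency $k$-resolved, there is $w\in V_2$ with, say, $d_H(v_l,w)\ge k$ and $v_j\in I[v_l,w]$, so that $d_H(v_j,w)=d_H(v_l,w)-1\ge k-1\ge D(G)$. Now take any $a\in S_1$ and consider the vertex $(a,w)\in S$. Because $d_G(a,u_i)\le D(G)<k\le d_H(v_l,w)$ and also $d_G(a,u_i)\le D(G)\le d_H(v_j,w)$, the max in the distance formula is governed entirely by the $H$-coordinate: $d_{G\boxtimes H}((a,w),(u_i,v_j))=d_H(v_j,w)$ and $d_{G\boxtimes H}((a,w),(u_i,v_l))=d_H(v_l,w)$, and these differ by exactly $1$. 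Hence $(a,w)\in S$ distinguishes the pair. (The symmetric subcase with $d_H(v_j,w)\ge k$ and $v_l\in I[v_j,w]$ is identical with the roles of $v_j$ and $v_l$ swapped.) This exhausts all cases, so $S$ is a local metric generator and $\dim_l(G\boxtimes H)\le|S|=n_2\cdot\dim_l(G)$. $\Boxe$
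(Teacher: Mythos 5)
Your construction and main idea coincide with the paper's proof: the paper also takes $S=S_1\times V(H)$ for a local metric basis $S_1$ of $G$, and in the case $v_j\sim v_l$ it likewise invokes the adjacency $k$-resolved vertex $w$ together with $D(G)<k$ to force the $H$-coordinate to govern the maximum, yielding distances $d_H(v_j,w)=d_H(v_l,w)-1$ and $d_H(v_l,w)$. The only difference is organisational, and it introduces a small error. The paper treats \emph{every} adjacent pair with $v_j\sim v_l$ (whether or not $i=k$) by this single $H$-based computation, since $\max\{d_G(u_i,a),d_H(v_j,w)\}<d_H(v_l,w)=\max\{d_G(u_k,a),d_H(v_l,w)\}$ holds regardless of the first coordinates; you instead split off the mixed case $u_i\sim u_k$, $v_j\sim v_l$ and argue through the $G$-coordinate. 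There your parenthetical for $a=u_k$ is wrong: the vertex $(a,v_j)=(u_k,v_j)$ satisfies $d_{G\boxtimes H}((u_k,v_j),(u_i,v_j))=d_G(u_k,u_i)=1$ and $d_{G\boxtimes H}((u_k,v_j),(u_k,v_l))=\max\{0,1\}=1$, so it does \emph{not} distinguish the pair; you compared $d_G(a,u_i)$ with $d_G(a,u_k)$ and dropped the $\max$ with $d_H(v_j,v_l)=1$. The slip is harmless and has two immediate repairs: note that if $a=u_k$ then $(u_k,v_l)\in S$ and distinguishes itself from its neighbour (equivalently, restrict to adjacent pairs outside $S$, which forces $a\ne u_k$); or simply run your $i=k$ argument for every pair with $v_j\sim v_l$, which is exactly what the paper does.
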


\begin{proof}
Let $V_1=\{u_1,u_2,...,u_{n_1}\}$ and $V_2=\{v_1, v_2,...,v_{n_2}\}$ be the set of vertices of $G$ and $H$, respectively. Let $S_1$ be a local metric generator for $G$. We will show that $S=S_1\times V_2$ is a local metric generator for $G\boxtimes H$. Let $(u_i,v_j),(u_r,v_l)$ be two adjacent vertices of $G\boxtimes H$. We differentiate the following two cases.

\noindent{Case 1.} $j=l$. Since  $u_i \sim u_r$ and  $S_1$ is a local metric generator for $G$,  there exists $u\in S_1$ such that $d_G(u_i,u)\ne d_G(u_r,u)$. Hence,
$$d_{G\boxtimes H}((u_i,v_j),(u,v_j))=d_G(u_i,u)\ne d_G(u_r,u)=d_{G\boxtimes H}((u_r,v_j),(u,v_j)).$$

\noindent{Case 2.} $v_j\sim v_l$. Since $H$ is adjacency $k$-resolved, there exists $v\in V_2$ such that $(d_H(v,v_l)\ge k$ and $v_j \in  I[v,v_l])$  or $(d_H(v,v_j)\ge k$ and $v_l\in  I[v,v_j])$. Say $d_H(v,v_l)\ge k$ and $v_j \in  I[v,v_l]$.
In such a case, as $D(G)<k$, for every $u\in S_1$ we have
\begin{align*}
d_{G\boxtimes H}((u_i,v_j),(u,v))&=\max\{d_G(u_i,u), d_H(v_j,v)\}\\
& < d_H(v,v_l) \\
&=\max\{d_G(u,u_r), d_H(v,v_l)\}\\
&=d_{G\boxtimes H}((u_r,v_l),(u,v)).
\end{align*}
Therefore, $S$ is a local metric generator for $G\boxtimes H$.
\end{proof}

\begin{lemma}\label{Lemma-k-resolved-bipartite}
Let $H$ be a  connected bipartite graph  of order greater than or equal to three. Then $H$ is adjacency $k$-resolved for any $k\in \{2,..,r(H)\}$.
\end{lemma}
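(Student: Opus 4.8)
The plan is to fix an arbitrary edge $xy$ of $H$ and to exhibit a single vertex $w$ witnessing the adjacency $k$-resolvability condition for this edge. The crucial starting observation is a parity constraint: since $H$ is bipartite and $x\sim y$, the vertices $x$ and $y$ lie in different parts of the bipartition, so for \emph{every} vertex $w\in V$ the numbers $d_H(x,w)$ and $d_H(y,w)$ have opposite parities. Combining this with the triangle inequality $|d_H(x,w)-d_H(y,w)|\le d_H(x,y)=1$, one concludes that $|d_H(x,w)-d_H(y,w)|=1$ for \emph{all} $w\in V$. Since $d_H(x,y)=1$, this says that for every $w$ exactly one of the memberships $x\in I[y,w]$ (equivalent to $d_H(y,w)=d_H(x,w)+1$) or $y\in I[x,w]$ (equivalent to $d_H(x,w)=d_H(y,w)+1$) holds.

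Next I would pick $w$ with $d_H(x,w)=\epsilon(x)$. Because $k\le r(H)\le \epsilon(x)=d_H(x,w)$, this $w$ is at distance at least $k$ from $x$. Now split according to the two cases provided by the parity observation. If $d_H(y,w)=d_H(x,w)+1$, then $d_H(y,w)\ge k$ and $d_H(y,w)=d_H(y,x)+d_H(x,w)$, so $x\in I[y,w]$ and the first alternative in the definition of adjacency $k$-resolved is satisfied. If instead $d_H(y,w)=d_H(x,w)-1$, then $d_H(x,w)\ge k$ and $d_H(x,w)=d_H(x,y)+d_H(y,w)$, so $y\in I[x,w]$ and the second alternative is satisfied. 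In either case the edge $xy$ is resolved in the required sense, and since $xy$ was an arbitrary edge, $H$ is adjacency $k$-resolved for each such $k$.

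The only genuinely delicate point is the parity observation — specifically, noticing that bipartiteness upgrades the trivial inequality $|d_H(x,w)-d_H(y,w)|\le 1$ to the exact equality $|d_H(x,w)-d_H(y,w)|=1$, which is exactly what forces one of the two interval memberships to hold for the \emph{same} $w$ that the eccentricity argument supplies; the rest is routine. It is worth recording two sanity checks: since $k\ge 2$ we get $d_H(x,w)\ge 2$, hence $w\ne x$, and then $d_H(y,w)\ge 1$, hence $w\ne y$, so $w$ is an admissible third vertex; and when $r(H)<2$ the index set $\{2,\dots,r(H)\}$ is empty and there is nothing to prove, consistently with the stated hypothesis on the order of $H$.
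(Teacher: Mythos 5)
Your proof is correct and rests on the same key idea as the paper's: bipartiteness rules out $d_H(x,w)=d_H(y,w)$, so the triangle inequality forces $|d_H(x,w)-d_H(y,w)|=1$ and hence one of the two interval memberships. The only (immaterial) difference is your choice of witness $w$ at distance $\epsilon(x)$ from $x$, whereas the paper takes $w$ with $d_H(x,w)=k$ exactly; both choices are available since $k\le r(H)\le\epsilon(x)$.
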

\begin{proof}
Let   $x,y,w\in V(H)$ such that $x\sim y$ and  $d_H(x,w)=k$, for some $k\in \{2,..,r(H)\}$. Since $H$ does not have cycles of odd length, $d_H(w,y)\ne k$. Thus, either $d_H(w,y)=d_H(w,x)+d_H(x,y)= k+1$ or $d_H(w,x)=d_H(w,y)+d_H(y,x)= k.$ Therefore, the result follows.
\end{proof}  
Now we derive a consequences of combining Theorem \ref{resolvedGraphs} and Lemma \ref{Lemma-k-resolved-bipartite}.

\begin{theorem} \label{UpperBoundsH-bipartite}
Let $G$ and $H$ be two connected non-trivial graphs. If $H$ is bipartite and  $D(G)<r(H)$, then
$\dim_l(G\boxtimes H)\le |V(H)|  \dim_l(G)$.
\end{theorem}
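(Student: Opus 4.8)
The plan is to derive Theorem~\ref{UpperBoundsH-bipartite} as an immediate corollary of the two results just established. The hypothesis gives us a connected bipartite graph $H$ and a connected non-trivial graph $G$ with $D(G)<r(H)$; we want $\dim_l(G\boxtimes H)\le |V(H)|\,\dim_l(G)$.

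First I would dispose of the size of $H$. Since $H$ is connected and non-trivial, $|V(H)|\ge 2$. If $|V(H)|=2$ then $H\cong K_2$, which is adjacency $2$-resolved (taking $w=x$ or $w=y$ one of the interval conditions with $k=1$ is trivially met, but in fact we need $k>D(G)\ge 1$, so $k\ge 2$; for $K_2$, $r(H)=1<2$, so the hypothesis $D(G)<r(H)=1$ forces $D(G)<1$, impossible for non-trivial $G$). Hence the hypothesis $D(G)<r(H)$ already forces $r(H)\ge 2$, and therefore $|V(H)|\ge 3$: a connected bipartite graph of radius at least $2$ cannot have only two vertices. So I may invoke Lemma~\ref{Lemma-k-resolved-bipartite}.

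Next, set $k=r(H)$. By the observation above $k\ge 2$, so $k\in\{2,\dots,r(H)\}$ and Lemma~\ref{Lemma-k-resolved-bipartite} tells us that $H$ is adjacency $k$-resolved. The hypothesis $D(G)<r(H)=k$ then puts us exactly in the situation of Theorem~\ref{resolvedGraphs}, whose conclusion is $\dim_l(G\boxtimes H)\le n_2\cdot\dim_l(G)$ with $n_2=|V(H)|$. This is precisely the claimed bound, and the proof is complete.

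There is essentially no obstacle here; the only point requiring a moment's care is making sure the index range $\{2,\dots,r(H)\}$ in Lemma~\ref{Lemma-k-resolved-bipartite} is non-empty, i.e.\ that $r(H)\ge 2$. This follows because $G$ is non-trivial connected, so $D(G)\ge 1$, whence $r(H)>D(G)\ge 1$. I would state this one line explicitly and then chain the two theorems together.
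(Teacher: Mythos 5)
Your proof is correct and follows exactly the route the paper intends: apply Lemma~\ref{Lemma-k-resolved-bipartite} with $k=r(H)$ and then invoke Theorem~\ref{resolvedGraphs}. Your explicit check that $r(H)\ge 2$ (hence $|V(H)|\ge 3$), which the paper leaves implicit, is a worthwhile addition but does not change the argument.
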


As we will show in Theorem \ref{Th-dim=n(n1-t1)}, the above inequality is tight.

\section{The Role of True Twin Equivalence Classes}
 Two  vertices $u$ and $v$ of a graph $G$ are 
 true twins if $N_G[u]=N_G[v]$.
Note that if two vertices $u$ and $v$ of a graph $G$ are true twins, then  $d_G(x,u)=d_G(x,v)$, for every $x\in V(G)-\{u,v\}$.
We define the \textit{true twin equivalence relation} $\mathcal{R}$ on $V(G)$ as follows:
$$x \mathcal{R} y \longleftrightarrow N_G[x]=N_G[y].$$
If the true twin equivalence classes are $U_1,U_2,...,U_t$, then every local metric generator of $G$ must contain at least $|U_i|-1$ vertices from $U_i$, for each $i\in \{1,...,t\}$. 
 Thus the following result presented in  \cite{Okamoto2010} holds.
 
  \begin{theorem}{\rm \cite{Okamoto2010}} \label{ThPrevioClasses}
  If $G$ is a nontrivial connected graph of order $n$ having $t$ true
twin equivalence classes, then $\dim_l(G) \ge  n - t.$
  \end{theorem}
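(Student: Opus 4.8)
The plan is to exploit the elementary fact, already noted just before the statement, that true twins are close to indistinguishable. First I would observe that if $u$ and $v$ are true twins in $G$, then they are adjacent: indeed $v\in N_G[v]=N_G[u]$, so $v\in N_G(u)$. Combined with the remark that $d_G(x,u)=d_G(x,v)$ for every $x\in V(G)-\{u,v\}$, this says that the only vertices of $G$ capable of distinguishing the adjacent pair $\{u,v\}$ are $u$ and $v$ themselves.

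Next I would let $S$ be an arbitrary local metric generator for $G$ and fix a true twin equivalence class $U_i$. I claim $|S\cap U_i|\ge |U_i|-1$. Suppose not; then there are two distinct vertices $u,v\in U_i$ with $u\notin S$ and $v\notin S$. Since $u$ and $v$ are true twins they are adjacent, so some vertex of $S$ must distinguish them; but by the previous paragraph the only possible distinguishers are $u$ and $v$, neither of which lies in $S$ — a contradiction. Hence at most one vertex of $U_i$ can be omitted from $S$.

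Finally, since the classes $U_1,\dots,U_t$ partition $V(G)$, summing the bound over all classes gives
$$|S|=\sum_{i=1}^{t}|S\cap U_i|\ge \sum_{i=1}^{t}\bigl(|U_i|-1\bigr)=n-t.$$
As $S$ was an arbitrary local metric generator, this yields $\dim_l(G)\ge n-t$.

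There is really no serious obstacle here; the argument is a short counting argument, and the only point that needs a moment's care is the justification that true twins are adjacent (so that the distinguishing requirement actually applies to each intra-class pair) and that no outside vertex can serve as a distinguisher. Everything else is a summation over the partition into true twin classes.
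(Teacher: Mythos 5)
Your argument is correct and follows the same route the paper sketches just before the statement: true twins are adjacent and indistinguishable by any third vertex, so any local metric generator must contain at least $|U_i|-1$ vertices of each class $U_i$, and summing over the partition gives $n-t$. Your write-up merely makes explicit the adjacency of true twins and the counting step, which the paper leaves implicit.
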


\begin{figure}[h]
\begin{center}
\begin{tikzpicture}
[inner sep=0.7mm, place/.style={circle,draw=black!30,fill=black!30,thick},xx/.style={circle,draw=black!99,fill=black!99,thick},
transition/.style={rectangle,draw=black!50,fill=black!20,thick},line width=1pt,scale=0.5]

\coordinate (X) at (9,0 ); \coordinate (Y) at (13,0 ); \coordinate (Z) at (17,0 );
\coordinate (X1) at (7.5,2.5 );  \coordinate (X3) at (10.5,2.5 );
\coordinate (Y1) at (11.5,-2.5 );  \coordinate (Y3) at (14.5,-2.5 );
\coordinate (Z1) at (15.5,2.5 );  \coordinate (Z3) at (18.5,2.5 );

\draw[black!40] (X)--(Y)--(Z);
\draw[black!40] (X3)--(X1)--(X)--(X3);

\draw[black!40] (Y1)--(Y)--(Y3);

\draw[black!40] (Z1)--(Z3)--(Z)--(Z1);

\node at (X) [place]  {$3$};\node at (X1) [place]  {$1$};\node at (X3) [place]  {$2$};
\node at (Y) [place]  {$4$};
{};\node at (Y1) [place]  {$5$};\node at (Y3) [place]  {$6$};
\node at (Z) [place]  {$7$};
{};\node at (Z1) [place]  {$8$};\node at (Z3) [place]  {$9$};

\end{tikzpicture}
\end{center}
\caption{This graph has $t=7$ true twin equivalence classes; two of them are $\{1,2\}$ and $\{8,9\}$ and the remain classes are singleton sets. A local metric basis is $\{1,9\}$ while a metric basis is $\{1,5,9\}$. Thus, $\dim_l(G)=n-t=2<3=\dim(G)$.}\label{Fig}
\end{figure}
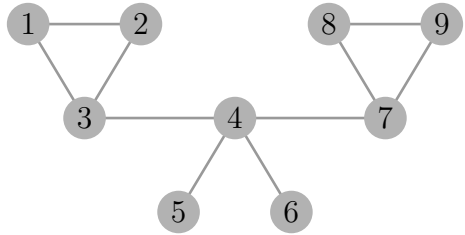


Note that the complete graph has only one true twin equivalence class and in any triangle-free graph all the true twin equivalence classes are singleton. 
As an example of non-complete graph $G$ of order $n$ having $t$ true twin equivalence classes, where $\dim_l(G)=n-t$, we take $G=K_1+\left(\displaystyle\bigcup_{i=1}^lK_{r_i}\right)$, $r_i\ge 2$, $l\ge 2$. In this case $G$ has $t=l+1$ true twin equivalence classes, $n=1+\sum_{i=1}^lr_i$ and $\dim_l(G)=\sum_{i=1}^l(r_i-1)=n-t$. Figure \ref{Fig} shows another example of  graph where the bound given in Theorem \ref{ThPrevioClasses} is reached.

\begin{lemma}\label{lemmaClasses}
Let $G$ and $H$ be two non-trivial connected graphs of order $n_1$ and $n_2$, having $t_1$ and $t_2$ true twin equivalent classes, respectively. Then the vertex set of $G\boxtimes H$ is partitioned into $t_1t_2$ true twin equivalent classes.
\end{lemma}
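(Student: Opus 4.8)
The plan is to show that the true twin equivalence classes of $G\boxtimes H$ are exactly the sets of the form $U_i\times W_j$, where $U_1,\dots,U_{t_1}$ are the true twin classes of $G$ and $W_1,\dots,W_{t_2}$ are the true twin classes of $H$; counting these products then gives $t_1t_2$ classes. The key structural fact to establish is the identity
$$N_{G\boxtimes H}[(a,b)]=N_G[a]\times N_H[b]$$
for every $(a,b)\in V(G)\times V(H)$. This follows directly from the definition of the strong product: $(c,d)$ is adjacent or equal to $(a,b)$ precisely when $c\in N_G[a]$ and $d\in N_H[b]$ (the three adjacency cases together with the equality case cover exactly the pairs in $N_G[a]\times N_H[b]$). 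I would record this as the first step.

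Next I would prove the two directions of the class description. For the easy direction, suppose $a\,\mathcal{R}_G\,a'$ and $b\,\mathcal{R}_H\,b'$, i.e. $N_G[a]=N_G[a']$ and $N_H[b]=N_H[b']$. Then by the neighborhood identity, $N_{G\boxtimes H}[(a,b)]=N_G[a]\times N_H[b]=N_G[a']\times N_H[b']=N_{G\boxtimes H}[(a',b')]$, so $(a,b)$ and $(a',b')$ are true twins in $G\boxtimes H$. For the converse, suppose $(a,b)$ and $(a',b')$ are true twins in $G\boxtimes H$, so $N_G[a]\times N_H[b]=N_G[a']\times N_H[b']$. Since both factors are nonempty (each contains at least the relevant vertex), a product of nonempty sets determines its factors: $N_G[a]=N_G[a']$ and $N_H[b]=N_H[b']$. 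Hence $a\,\mathcal{R}_G\,a'$ and $b\,\mathcal{R}_H\,b'$. This shows the true twin equivalence classes of $G\boxtimes H$ are precisely the products $U_i\times W_j$.

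Finally, since the $U_i$ partition $V(G)$ and the $W_j$ partition $V(H)$, the products $U_i\times W_j$ with $1\le i\le t_1$ and $1\le j\le t_2$ partition $V(G)\times V(H)=V(G\boxtimes H)$, and they are pairwise distinct, so there are exactly $t_1t_2$ of them. I do not anticipate a serious obstacle here; the only point requiring a word of care is the "product of nonempty sets determines its factors" step, which is why nontriviality (ensuring all neighborhoods are nonempty, though closed neighborhoods are automatically nonempty anyway) and the precise form of the neighborhood identity matter. The bulk of the work is simply unwinding the strong product adjacency definition cleanly.
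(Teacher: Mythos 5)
Your proposal is correct and follows essentially the same route as the paper: both hinge on the identity $N_{G\boxtimes H}[(a,b)]=N_G[a]\times N_H[b]$ and then identify the true twin classes of $G\boxtimes H$ as the products of the factor classes. Your treatment of the converse direction (a product of nonempty sets determines its factors) is, if anything, slightly more explicit than the paper's.
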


\begin{proof}
First of all, we would point out that for any $a\in V(G)$ and $b\in V(H)$ it holds
$$N_{G\boxtimes H}[(a,b)]= \{(x,y): \; x\in N_G[a], y\in N_H[b]\}
=N_G[a] \times N_H[b].$$ 

Now, since the result immediately holds for complete graphs, we assume that $G\not\cong K_{n_1}$ or $H\not\cong K_{n_2}$. Let $U_1,U_2,...,U_{t_1}$ and $U'_1,U'_2,...,U'_{t_2}$ be the true twin equivalence classes of $G$ and $H$, respectively.
Since  each  $U_i$ (and $U'_j$) induces  a clique and its vertices have identical closed neighbourhoods, for every $a,c\in U_i$ and $b,d\in U'_j$,
$$N_{G\boxtimes H}[(a,b)] 
=N_G[a] \times N_H[b] 
 =N_G[c]\times N_H[d] 
 =N_{G\boxtimes H}[(c,d)].
$$
Hence, $V(G)\times V(H)$ is partitioned as $V(G)\times V(H)=\bigcup_{j=1}^{t_2}\left(\bigcup_{i=1}^{t_1} U_i\times U'_j \right)$,
where   $U_i\times U'_j$  induces a clique in $G\boxtimes H$ and its vertices have identical closed neighbourhoods.
Moreover, for any $(a,b)\in U_i\times U'_j$ and $(c,d)\in U_k\times U'_l$, where $i\ne k$ or $j\ne l$, we have 
$$N_{G\boxtimes H}[(a,b)] =N_G[a] \times N_H[b]
\ne N_G[c]\times N_H[d]
=N_{G\boxtimes H}[(c,d)].
$$
Therefore, the true twin equivalence classes of $G\boxtimes H$ are of the form $U_i\times U'_j$, where $i\in \{1,..,t_1\}$ and $j\in \{1,..,t_2\}$.
\end{proof}

We would point out that the above result was indirectly obtained in \cite{Rodriguez-Velazquez-et-al2014}, proof of Theorem 2.3.

Theorem \ref{ThPrevioClasses} and Lemma \ref{lemmaClasses} directly lead to the next result.  

\begin{theorem}\label{Generalizacompleto-por-G}
Let $G$ and $H$ be two non-trivial connected graphs of order $n_1$ and $n_2$, having $t_1$ and $t_2$ true twin equivalence classes, respectively. Then $$\dim_l(G\boxtimes H)\ge n_1n_2-t_1t_2.$$
\end{theorem}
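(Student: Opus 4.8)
The proof is essentially immediate once the two ingredients are in hand, so the plan is mostly a matter of assembling them correctly. The statement claims $\dim_l(G\boxtimes H)\ge n_1n_2-t_1t_2$, where $n_1n_2$ is the order of $G\boxtimes H$ (since $|V(G\boxtimes H)|=|V(G)\times V(H)|=n_1\cdot n_2$) and $t_1t_2$ is, by Lemma~\ref{lemmaClasses}, exactly the number of true twin equivalence classes of $G\boxtimes H$. Thus the desired inequality is just the instance of Theorem~\ref{ThPrevioClasses} applied to the graph $G\boxtimes H$.

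Concretely, I would proceed as follows. First I would note that $G\boxtimes H$ is a nontrivial connected graph: it is connected because $G$ and $H$ are connected (the Cartesian product $G\square H$ is a spanning connected subgraph), and it is nontrivial since $n_1,n_2\ge 2$ forces $n_1n_2\ge 4$. Next I would invoke Lemma~\ref{lemmaClasses} to conclude that $G\boxtimes H$ has precisely $t_1t_2$ true twin equivalence classes. Finally I would apply Theorem~\ref{ThPrevioClasses} with the graph $G\boxtimes H$, whose order is $n_1n_2$ and whose number of true twin equivalence classes is $t_1t_2$, obtaining $\dim_l(G\boxtimes H)\ge n_1n_2-t_1t_2$.

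There is essentially no obstacle here: both Lemma~\ref{lemmaClasses} and Theorem~\ref{ThPrevioClasses} have already been established (the latter is quoted from \cite{Okamoto2010}), and the theorem is flagged in the excerpt itself as following ``directly'' from them. The only thing to be a little careful about is the bookkeeping — making sure the order of $G\boxtimes H$ is correctly identified as $n_1n_2$ rather than $n_1+n_2$, and that the class count from Lemma~\ref{lemmaClasses} is substituted for the parameter $t$ in Theorem~\ref{ThPrevioClasses}. Since the excerpt already observes that this follows directly, the cleanest presentation is simply to state those two substitutions and conclude; no separate argument is needed.
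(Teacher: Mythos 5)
Your proposal is correct and coincides exactly with the paper's argument: the result is obtained by applying Theorem~\ref{ThPrevioClasses} to $G\boxtimes H$, whose order is $n_1n_2$ and whose number of true twin equivalence classes is $t_1t_2$ by Lemma~\ref{lemmaClasses}. Nothing further is needed.
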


By Theorems \ref{The1Zhang}, \ref{generalUpperBound} and   \ref{Generalizacompleto-por-G} we deduce the following result. 
\begin{theorem}\label{F-Consequences}
Let $G$ and $H$ be two non-trivial connected graphs of order $n_1$ and $n_2$, having $t_1$ and $t_2$ true twin equivalence classes, respectively. Then the following assertions hold:
\begin{enumerate}[{\rm (i)}]
\item If $\dim_l(G)=n_1-t_1$ and $\dim_l(H)=n_2-t_2$, then $\dim_l(G\boxtimes H)= n_1n_2-t_1t_2$.
\item If  $\dim_l(G)=n_1-t_1$ and $H$ is bipartite, then $n_2(n_1-t_1)\le \dim_l(G\boxtimes H)\le n_2(n_1-t_1)+t_1$.
\end{enumerate}
\end{theorem}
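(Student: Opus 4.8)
The plan is to derive Theorem~\ref{F-Consequences} as a routine consequence of the three results it cites, by combining their bounds.

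\textbf{Part (i).} First I would invoke Theorem~\ref{Generalizacompleto-por-G} to obtain the lower bound $\dim_l(G\boxtimes H)\ge n_1n_2-t_1t_2$, which holds unconditionally. For the matching upper bound, I would use the hypotheses $\dim_l(G)=n_1-t_1$ and $\dim_l(H)=n_2-t_2$ and feed them into the general upper bound of Theorem~\ref{generalUpperBound}:
\begin{align*}
\dim_l(G\boxtimes H)&\le n_1\dim_l(H)+n_2\dim_l(G)-\dim_l(G)\dim_l(H)\\
&= n_1(n_2-t_2)+n_2(n_1-t_1)-(n_1-t_1)(n_2-t_2).
\end{align*}
Expanding the right-hand side gives $n_1n_2-n_1t_2+n_1n_2-n_2t_1-(n_1n_2-n_1t_2-n_2t_1+t_1t_2)=n_1n_2-t_1t_2$. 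Since the lower and upper bounds coincide, equality follows.

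\textbf{Part (ii).} Here the hypothesis on $H$ is only that it is bipartite, so by Theorem~\ref{The1Zhang} we have $\dim_l(H)=1$ (note $H$ is non-trivial and connected, hence bipartite forces $\dim_l(H)=1$), together with $\dim_l(G)=n_1-t_1$. For the lower bound, bipartiteness of $H$ means all its true twin classes are singletons (a bipartite graph is triangle-free, so no two vertices share a closed neighbourhood unless equal), i.e. $t_2=n_2$; then Theorem~\ref{Generalizacompleto-por-G} yields $\dim_l(G\boxtimes H)\ge n_1n_2-t_1n_2=n_2(n_1-t_1)$. For the upper bound, Theorem~\ref{generalUpperBound} with $\dim_l(H)=1$ gives
$$\dim_l(G\boxtimes H)\le n_1\cdot 1+n_2(n_1-t_1)-(n_1-t_1)\cdot 1=n_2(n_1-t_1)+n_1-(n_1-t_1)=n_2(n_1-t_1)+t_1,$$
as claimed.

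\textbf{Main obstacle.} There is essentially no obstacle: the proof is purely arithmetic bookkeeping once the right earlier results are plugged in. The only point requiring a moment's care is the observation, used in part (ii), that a non-trivial connected bipartite graph $H$ has $t_2=n_2$ true twin equivalence classes — this follows because bipartite graphs are triangle-free, and the excerpt already notes that in any triangle-free graph all true twin equivalence classes are singletons. Everything else is substitution into Theorems~\ref{The1Zhang}, \ref{generalUpperBound}, and \ref{Generalizacompleto-por-G} and simplifying.
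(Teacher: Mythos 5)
Your proof is correct and is exactly the derivation the paper intends: the paper offers no written proof, saying only that the result follows from Theorems~\ref{The1Zhang}, \ref{generalUpperBound} and \ref{Generalizacompleto-por-G}, and your substitutions and simplifications supply precisely that missing bookkeeping. One small caveat: your intermediate claim in part~(ii) that a non-trivial connected bipartite $H$ has $t_2=n_2$ fails for $H=K_2$, whose two vertices are true twins (the paper's own remark about triangle-free graphs implicitly assumes order at least three); this does not affect your conclusion, since the lower bound only requires $t_2\le n_2$, which always holds, giving $n_1n_2-t_1t_2\ge n_1n_2-t_1n_2=n_2(n_1-t_1)$.
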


Since  any complete graph $K_n$ has only one true twin equivalence class, Theorem \ref{F-Consequences} leads to the next result.

\begin{corollary}\label{KnporOtros}
Let $H$ be a  connected graph  of order $n'\ge 2$   having $t$   true twin equivalent classes. Then for any   integer $n\ge 2$,
$$\dim_l(K_n\boxtimes H)=nn'-t.$$
In particular, if $H$ does not have   true twin vertices, then
$$\dim_l(K_n\boxtimes H)=n'(n-1).$$
\end{corollary}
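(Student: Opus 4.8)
The plan is to derive Corollary~\ref{KnporOtros} directly from Theorem~\ref{F-Consequences}(i) by exploiting the fact that $K_n$ has exactly one true twin equivalence class and $\dim_l(K_n)=n-1$ (Theorem~\ref{The1Zhang}). Concretely, apply Theorem~\ref{F-Consequences}(i) with $G=K_n$, so that $n_1=n$, $t_1=1$, and $\dim_l(G)=n-1=n_1-t_1$; taking $H$ to be an arbitrary connected graph of order $n'\ge 2$ with $t$ true twin equivalence classes, the hypothesis ``$\dim_l(H)=n_2-t_2$'' of part~(i) is the only remaining thing to check before concluding $\dim_l(K_n\boxtimes H)=n_1n_2-t_1t_2=nn'-t$.

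The first step, then, is to verify that \emph{every} connected graph $H$ of order at least $2$ satisfies $\dim_l(H)=n'-t$, where $t$ is its number of true twin equivalence classes. The lower bound $\dim_l(H)\ge n'-t$ is exactly Theorem~\ref{ThPrevioClasses}. For the matching upper bound one should argue that picking all but one vertex from each true twin class yields a local metric generator: if $U_1,\dots,U_t$ are the true twin classes and $S$ consists of $|U_i|-1$ chosen vertices from each $U_i$, then any two adjacent vertices $x\sim y$ of $H$ that are both outside $S$ must lie in two \emph{different} true twin classes (since each class already contributes all but at most one of its members to $S$, and two vertices both missing from $S$ cannot come from the same class), hence $N_H[x]\ne N_H[y]$, so there is a vertex $w$ adjacent to exactly one of $x,y$; since vertices of a true twin class have identical distances to all other vertices, one checks $w$ (or a suitable representative of $w$'s class lying in $S$) distinguishes $x$ and $y$ — a small case analysis on whether $w\in S$ completes this. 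This gives $\dim_l(H)\le n'-t$, hence equality. (Alternatively, one may simply invoke that this equality is well known / follows from the discussion preceding Theorem~\ref{ThPrevioClasses}.)

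Once $\dim_l(H)=n'-t$ is established, apply Theorem~\ref{F-Consequences}(i): both hypotheses ``$\dim_l(G)=n_1-t_1$'' and ``$\dim_l(H)=n_2-t_2$'' hold, so $\dim_l(K_n\boxtimes H)=nn'-t\cdot 1=nn'-t$, proving the first displayed equality. For the ``in particular'' clause, suppose $H$ has no true twin vertices; then every true twin class of $H$ is a singleton, so $t=n'$, and substituting gives $\dim_l(K_n\boxtimes H)=nn'-n'=n'(n-1)$, as claimed.

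The main obstacle — really the only non-bookkeeping point — is justifying the upper bound $\dim_l(H)\le n'-t$ for arbitrary connected $H$, i.e.\ showing that removing one representative from each true twin class leaves a local metric generator. The delicate part is handling adjacent pairs $x\sim y$ from distinct true twin classes and confirming that a distinguishing vertex can be taken \emph{inside} the candidate generator $S$: if the natural distinguisher $w$ happens to be one of the omitted representatives, one uses that $w$'s own true twin class has another member in $S$ with identical distances to all vertices other than $w$ itself, and checks this does not create an obstruction. This is the same style of argument already used implicitly in the paragraph before Theorem~\ref{ThPrevioClasses} and in the proof of Lemma~\ref{lemmaClasses}, so it is routine, but it is where the real content of the corollary beyond a direct citation lies; everything else is substitution into Theorem~\ref{F-Consequences}.
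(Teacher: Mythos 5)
Your reduction to Theorem~\ref{F-Consequences}(i) hinges on the claim that \emph{every} connected graph $H$ of order $n'\ge 2$ satisfies $\dim_l(H)=n'-t$, and that claim is false. Theorem~\ref{ThPrevioClasses} only gives the inequality $\dim_l(H)\ge n'-t$; equality can fail badly. Take $H=P_3$: its three closed neighbourhoods are pairwise distinct, so $t=n'=3$ and $n'-t=0$, while $\dim_l(P_3)=1$ by Theorem~\ref{The1Zhang} (the empty set never resolves an edge). More generally, any graph without true twins has $n'-t=0$ but $\dim_l\ge 1$, e.g.\ $\dim_l(C_5)=2$. The place where your upper-bound argument breaks is exactly the step you flagged as ``delicate'': when the natural distinguisher $w$ of an adjacent pair $x\sim y$ is an omitted representative, you want to replace it by another member of its true twin class lying in $S$, but that class may be a singleton (in a triangle-free graph \emph{every} class is a singleton, so $S=\emptyset$). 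Hence the hypothesis ``$\dim_l(H)=n_2-t_2$'' of Theorem~\ref{F-Consequences}(i) genuinely fails for many admissible $H$, and the corollary cannot be obtained by substitution into that theorem alone. (To be fair, the paper's own one-line justification via Theorem~\ref{F-Consequences} has the same defect; in the ``in particular'' clause, where $t=n'$, applying part~(i) would require $\dim_l(H)=0$.)

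The statement itself is nevertheless true, and the correct place to run your replacement argument is in the product, not in $H$. By Lemma~\ref{lemmaClasses} the true twin classes of $K_n\boxtimes H$ are the sets $V(K_n)\times U'_j$, $j=1,\dots,t$, each of cardinality at least $n\ge 2$. Let $S$ consist of all vertices of $K_n\boxtimes H$ except one representative per class. Two adjacent vertices outside $S$ lie in distinct classes, so some $w$ belongs to exactly one of their closed neighbourhoods; one checks that $w$ cannot be a true twin of either of them, and since the class of $w$ has at least two members, some true twin $w'$ of $w$ lies in $S$ and is at distance $1$ from one vertex of the pair and at distance at least $2$ from the other. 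Hence $\dim_l(K_n\boxtimes H)\le nn'-t$, and Theorem~\ref{Generalizacompleto-por-G} supplies the matching lower bound. This is the argument your proposal needs; as written, the proposal establishes the corollary only for those $H$ that happen to satisfy $\dim_l(H)=n'-t$.
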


Note that if  $H$ is an adjacency $k$-resolved graph, for $k\ge 2$, then $H$ does not have true twin vertices. Therefore,  
Theorems \ref{Generalizacompleto-por-G} and \ref{resolvedGraphs}  lead to the following result.

\begin{theorem}\label{Th-dim=n(n1-1)}
Let $H$ be an adjacency $k$-resolved graph of order $n_2$ and let $G$ be a non-trivial connected graph of order $n_1$, having $t_1$ true twin equivalence classes and  diameter $D(G)<k$. If
 $\dim_l(G)=n_1-t_1$, then
$\dim_l(G\boxtimes H)= n_2(n_1-t_1)$.
\end{theorem}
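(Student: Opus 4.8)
The plan is to obtain the conclusion by squeezing $\dim_l(G\boxtimes H)$ between a lower bound and an upper bound that happen to coincide. The upper bound is essentially free: since $H$ is adjacency $k$-resolved of order $n_2$ and $G$ is a non-trivial graph with $D(G)<k$, Theorem~\ref{resolvedGraphs} gives directly
$$\dim_l(G\boxtimes H)\le n_2\cdot\dim_l(G)=n_2(n_1-t_1),$$
where the last equality uses the hypothesis $\dim_l(G)=n_1-t_1$. So the entire content of the statement is the matching lower bound.

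For the lower bound I would invoke Theorem~\ref{Generalizacompleto-por-G}, which says $\dim_l(G\boxtimes H)\ge n_1n_2-t_1t_2$, where $t_2$ is the number of true twin equivalence classes of $H$. The key observation — already noted in the paragraph just before the statement — is that an adjacency $k$-resolved graph with $k\ge 2$ has no true twin vertices: indeed, if $x$ and $y$ were true twins, then for the adjacent pair $x,y$ the resolving vertex $w$ would satisfy (say) $d_H(y,w)\ge k\ge 2$ with $x\in I[y,w]$, forcing $d_H(x,w)=d_H(y,w)-1\neq d_H(y,w)$, contradicting $d_H(x,w)=d_H(y,w)$ which holds for all $w\notin\{x,y\}$; and $w\notin\{x,y\}$ since $d_H(y,w)\ge 2$. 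Hence every true twin class of $H$ is a singleton, i.e. $t_2=n_2$. Plugging this into Theorem~\ref{Generalizacompleto-por-G} yields
$$\dim_l(G\boxtimes H)\ge n_1n_2-t_1n_2=n_2(n_1-t_1).$$

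Combining the two inequalities gives $\dim_l(G\boxtimes H)=n_2(n_1-t_1)$, as claimed. There is no real obstacle here: the theorem is a formal corollary of Theorem~\ref{resolvedGraphs}, Theorem~\ref{Generalizacompleto-por-G}, and the elementary fact that adjacency $2$-resolved (hence adjacency $k$-resolved for $k\ge 2$) graphs are free of true twins. The only point that requires a line of care is justifying $t_2=n_2$, i.e. the no-true-twins claim, and for that one should be slightly careful that the resolving vertex $w$ provided by the definition is genuinely distinct from both $x$ and $y$ — which it is, precisely because the defining inequality $d_H(\cdot,w)\ge k\ge 2$ rules out $w$ coinciding with an endpoint of the edge $xy$. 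Once that is in place the rest is immediate.
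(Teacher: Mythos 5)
Your proposal is correct and follows exactly the paper's route: the authors likewise obtain the upper bound from Theorem~\ref{resolvedGraphs} and the lower bound from Theorem~\ref{Generalizacompleto-por-G} together with the observation (stated just before the theorem) that an adjacency $k$-resolved graph with $k\ge 2$ has no true twins, hence $t_2=n_2$. Your explicit justification of that observation, including the check that $w\notin\{x,y\}$, is a welcome filling-in of a detail the paper leaves to the reader.
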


Our next result can be deduced from Corollary \ref{Lemma-k-resolved-bipartite} and Theorem \ref{Th-dim=n(n1-1)}  or from Theorems \ref{Generalizacompleto-por-G} and \ref{UpperBoundsH-bipartite}.

\begin{theorem}\label{Th-dim=n(n1-t1)}
Let $H$ be connected bipartite graph of order $n_2$ and let $G$ be a non-trivial connected graph of order $n_1$, having $t_1$ true twin equivalence classes. If
 $\dim_l(G)=n_1-t_1$ and $D(G)<r(H)$, then
$\dim_l(G\boxtimes H)= n_2(n_1-t_1)$.
\end{theorem}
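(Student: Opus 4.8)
The plan is to sandwich $\dim_l(G\boxtimes H)$ between matching lower and upper bounds, both of which are already available from earlier results. First I would obtain the upper bound: since $H$ is bipartite and $D(G)<r(H)$, Theorem \ref{UpperBoundsH-bipartite} gives $\dim_l(G\boxtimes H)\le |V(H)|\,\dim_l(G)$; substituting $|V(H)|=n_2$ and the hypothesis $\dim_l(G)=n_1-t_1$ yields $\dim_l(G\boxtimes H)\le n_2(n_1-t_1)$. For the lower bound I would apply Theorem \ref{Generalizacompleto-por-G}, which gives $\dim_l(G\boxtimes H)\ge n_1n_2-t_1t_2$, where $t_2$ denotes the number of true twin equivalence classes of $H$.

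The step that requires a little care is to show that $t_2=n_2$, i.e.\ that $H$ has no non-trivial true twin class. I would argue as follows: $G$ non-trivial and connected forces $D(G)\ge 1$, so $r(H)>D(G)\ge 1$, whence $r(H)\ge 2$; in particular $H\not\cong K_2$ and $H$ has order at least $3$. Being bipartite, $H$ is triangle-free, so a pair of distinct true twins $u,v$ would satisfy $N_H[u]=N_H[v]=\{u,v\}$ (any further neighbour of $u$ would be adjacent to $v$, creating a triangle), making $\{u,v\}$ a $K_2$ connected component---impossible in a connected graph of order $\ge 3$. Hence each true twin class of $H$ is a singleton and $t_2=n_2$. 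Combining the two bounds, $n_2(n_1-t_1)=n_1n_2-t_1n_2\le \dim_l(G\boxtimes H)\le n_2(n_1-t_1)$, which forces equality.

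As an alternative route I would deduce the result directly from Theorem \ref{Th-dim=n(n1-1)}: taking $k=D(G)+1$, the inequalities $D(G)\ge 1$ and $D(G)<r(H)$ give $2\le k\le r(H)$, so Lemma \ref{Lemma-k-resolved-bipartite} makes $H$ adjacency $k$-resolved (it has order $\ge 3$ as observed above), and since $D(G)<k$ and $\dim_l(G)=n_1-t_1$, Theorem \ref{Th-dim=n(n1-1)} applies verbatim and gives $\dim_l(G\boxtimes H)=n_2(n_1-t_1)$. I do not expect a genuine obstacle here; the only delicate point is bookkeeping---confirming that the bipartite factor contributes exactly $t_2=n_2$ true twin classes (equivalently, none), which the radius condition guarantees by ruling out $K_2$ and hence ruling out true twins.
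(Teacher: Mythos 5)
Your proposal is correct and follows exactly the two derivations the paper itself indicates (combining Theorems \ref{Generalizacompleto-por-G} and \ref{UpperBoundsH-bipartite}, or alternatively Lemma \ref{Lemma-k-resolved-bipartite} with Theorem \ref{Th-dim=n(n1-1)}); in fact you supply the detail the paper leaves implicit, namely that the radius condition forces $H$ to have no true twins, so $t_2=n_2$ and the two bounds coincide.
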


\section{The Particular Case of  $P_t\boxtimes G$}

In this section we assume that  $t$ is an integer greater than or equal to two and $V(P_t)=\{u_1,u_2,\dots,u_t\}$, where $u_i\sim u_{i+1}$, for every $i\in \{1,\dots , t-1\}$. In the proof of the next lemma we will use the notation ${\cal B}_r(x)$ for the closed ball of center $x\in V(G)$ and radius $r\ge 0$, \textit{i.e}.,
$${\cal B}_r(x).=\{y\in V(G):\; d_G(x,y)\le r\}.$$

\begin{lemma}\label{los extremos}
Let $G$ be a connected graph  and let $t\ge 1$ be an integer. Let  $ u_{i_1},   u_{i_2}, \ldots , u_{i_b}$ be the first components of the elements in a local metric basis of $P_t\boxtimes G$,  where $ {i_1} \le   {i_2}\le \cdots \le {i_b}$. Then the following assertions hold.
\begin{enumerate}[{\rm (i)}]
\item $i_2\le D(G)+1$ and $i_{b-1} \ge t- D(G)$.

\item For any $l\in \{1,\dots, b-2\}$, $i_{l+2}\le 2D(G)+i_l$. 

\item $i_3\le 2D(G)+1$. 
\end{enumerate}
 \end{lemma}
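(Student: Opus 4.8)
The plan is to work directly with the distance formula $d_{P_t \boxtimes G}((u_p,x),(u_q,y)) = \max\{|p-q|, d_G(x,y)\}$ and the fact (Theorem \ref{The1Zhang}) that a local metric generator must distinguish every pair of adjacent vertices. For part (i), suppose toward a contradiction that $i_2 \ge D(G)+2$, so that at most one basis vertex — namely the one with first coordinate $i_1$ — has first coordinate $\le D(G)+1$. Consider two adjacent vertices of the form $(u_1,x)$ and $(u_2,x)$ (adjacent since $u_1\sim u_2$ and second coordinates agree). I would show that any basis vertex $(u_{i_l},z)$ with $i_l \ge D(G)+2$ satisfies $d_{P_t\boxtimes G}((u_{i_l},z),(u_1,x)) = i_l - 1 = i_l - 2 + 1 \ge \max\{i_l-2, D(G)\} = d_{P_t\boxtimes H}((u_{i_l},z),(u_2,x))$; wait — more carefully, since $i_l - 2 \ge D(G) \ge d_G(z,x)$ and $i_l - 1 \ge D(G)+1 > d_G(z,x)$, both distances collapse to the $P_t$-coordinate, giving $i_l-1$ and $i_l-2$ respectively, so these vertices \emph{are} distinguished. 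So that naive pair does not work; instead the right pair is $(u_1,x)$ versus $(u_1,x')$ for suitably chosen $x\sim x'$, or one uses that a single basis vertex cannot distinguish all adjacent pairs living in the "$u_1$-fibre". Concretely: a basis vertex $(u_{i_l}, z)$ with $i_l \ge D(G)+2$ has, for every $w$ in the $u_1$-fibre, $d((u_{i_l},z),(u_1,w)) = i_l - 1$, a constant independent of $w$; hence it distinguishes no adjacent pair inside the $u_1$-fibre. Since $\{u_1\}\times V(G)$ induces a copy of $G$, which is non-bipartite in general but in any case contains adjacent vertices, the only basis vertices that can resolve edges of this fibre are those with first coordinate $\le D(G)+1$; as the fibre copy of $G$ needs at least $\dim_l(G) \ge 1$ resolving vertices "close enough", at least... — the cleanest argument is: the projection of the basis onto the $u_1$-fibre must, after collapsing the $P_t$ coordinate, still resolve $G$, so there must be a basis element with first coordinate within $D(G)$ of index $1$; if only $i_1$ qualifies that is fine for $i_1$, but then consider instead that $\dim_l(G)$ may be larger than $1$ — for $\dim_l(G)=1$ one basis vertex suffices, so the bound $i_2 \le D(G)+1$ must come from a more global pigeonhole. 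I would therefore fix the argument as follows: take an edge $x\sim x'$ of $G$ realizing the local dimension, and look at the four vertices $(u_1,x),(u_1,x'),(u_2,x),(u_2,x')$; any basis vertex with first coordinate $\ge D(G)+2$ sees all four at equal $P_t$-dominated distance and so distinguishes none of the pairs among them along the relevant edges, forcing a second basis vertex with small first coordinate. The symmetric argument from the $u_t$ end gives $i_{b-1}\ge t - D(G)$.

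For part (ii), the idea is the same pigeonhole localized to the "strip" between $u_{i_l}$ and $u_{i_{l+2}}$. Suppose $i_{l+2} \ge 2D(G) + i_l + 1$. Then the basis vertices split into three groups: those with first coordinate $\le i_l$, those with first coordinate $= i_{l+1}$ lying strictly between, and those with first coordinate $\ge i_{l+2}$. Pick an index $m$ with $i_l < m < i_{l+2}$ that is "far" from both walls — specifically $m \ge i_l + D(G) + 1$ and $m \le i_{l+2} - D(G) - 1$, which is possible precisely because the gap $i_{l+2}-i_l \ge 2D(G)+1$ — wait, we need a \emph{strict} middle index; $i_{l+2}-i_l \ge 2D(G)+1$ gives room for at least one such $m$. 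Now consider adjacent vertices $(u_m, x)$ and $(u_m, x')$ with $x \sim x'$ in $G$ (again choosing the edge to realize $\dim_l(G)$, or any edge). A basis vertex $(u_{i_j}, z)$ with $i_j \le i_l$ has distance $m - i_j \ge m - i_l \ge D(G)+1 > d_G(z,\cdot)$ to both, so it sees both at distance $m - i_j$: no distinction. Symmetrically a basis vertex with first coordinate $\ge i_{l+2}$ sees both at distance $i_j - m$: no distinction. Hence the only basis vertices able to distinguish this fibre pair are those with first coordinate $i_{l+1}$ (strictly between $i_l$ and $i_{l+2}$) — but there might be several of those, all with first coordinate $i_{l+1}$, and for the fibre copy of $G$ at level $m$ they act, after collapsing the $P_t$-coordinate (which gives $|i_{l+1}-m| \le D(G)-1 < $ relevant $G$-distances only when that difference is small) — hmm, this needs $|i_{l+1} - m| \le D(G) - 1$ to be useful, which we cannot guarantee. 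The robust move is to choose $m = i_{l+1}$ itself when $i_l < i_{l+1} < i_{l+2}$: then the vertices with first coordinate $i_{l+1}$ sit \emph{in} that fibre and can resolve it via their $G$-coordinates, but vertices with first coordinate $\le i_l$ or $\ge i_{l+2}$ cannot (since $i_{l+1}-i_l$ and $i_{l+2}-i_{l+1}$ — we'd need these both $\ge D(G)+1$, which does NOT follow from $i_{l+2}-i_l \ge 2D(G)+1$ alone, e.g. one gap could be small). So I will instead argue: among $i_l, i_{l+1}, i_{l+2}$ either $i_{l+1} - i_l \ge D(G)+1$ or $i_{l+2} - i_{l+1} \ge D(G)+1$ (since their sum is $\ge 2D(G)+1$, at least one is $\ge D(G)+1$ — here I use $\lceil (2D(G)+1)/2 \rceil = D(G)+1$); say $i_{l+1}-i_l \ge D(G)+1$. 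Take the fibre at level $m$ with $i_l < m \le i_{l+1}$, chosen so $m - i_l = D(G)+1$ if possible, else $m = i_{l+1}$; then every basis vertex with first coordinate $\le i_l$ fails to resolve the $G$-edge in that fibre, and the $G$-edges in that fibre must still be resolved, forcing a basis vertex with first coordinate strictly between $i_l$ and $m \le i_{l+1}$ — contradicting that $i_{l+1}$ is the immediate successor of $i_l$ in the sorted list. The case $i_{l+2}-i_{l+1}\ge D(G)+1$ is symmetric from the right.

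Part (iii) is immediate from (i) and (ii): by (i), $i_2 \le D(G)+1$; applying (ii) with... actually $i_3 \le 2D(G) + i_1 \le 2D(G)+1$ using $i_1 \ge 1$ — so (iii) follows from (ii) with $l=1$ together with $i_1 \ge 1$.

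\textbf{Main obstacle.} The crux is making the pigeonhole in (i) and (ii) genuinely correct rather than superficially plausible: a single "far" basis vertex trivially fails to resolve an entire $G$-fibre, but when $\dim_l(G)$ is small (even $1$) one nearby basis vertex can carry the whole load, so the contradiction cannot simply be "not enough close basis vertices." The argument has to exploit that between two consecutive first-coordinates $i_l$ and $i_{l+1}$ there are, by definition, \emph{no other} basis first-coordinates, so the $G$-fibres at every level in the open interval $(i_l, i_{l+1})$ — if that interval is wide, i.e. $\ge D(G)+1$ apart — are resolved only by the level-$i_l$ and level-$i_{l+1}$ vertices, and a level-$i_l$ vertex at $P_t$-distance $> D(G)$ collapses to a constant on the fibre, hence is useless; so all the work falls on level $i_{l+1}$, and then a counting/consistency check at the far end of that wide interval yields the bound. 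Getting the "choose the fibre level at distance exactly $D(G)+1$ from the nearer wall" bookkeeping right, and correctly handling the boundary cases $l=1$, $l=b-2$ (where one wall is the genuine endpoint $u_1$ or $u_t$ of $P_t$), is where the real care is needed; everything else is the distance formula plus Theorem \ref{The1Zhang}.
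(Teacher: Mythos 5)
There are genuine gaps in all three parts, and they share a common root: you correctly identify the main obstacle (a single basis vertex with small first coordinate can, by itself, resolve an entire $G$-fibre), but you never supply the device that overcomes it. The paper's device is to choose the test edge \emph{inside the fibre} as $\{y,z\}$ where $y$ is the \emph{second coordinate of the one potentially dangerous basis element} and $z\in N_G(y)$; then that element is at $G$-distance $0$ and $1$ from the two endpoints, and as long as its $P_t$-distance to the chosen fibre level is at least $1$, the $\max$ in the distance formula swallows both, so even the nearby element fails. In (i) this is why the paper tests the pair $(u_1,y),(u_1,z)$ when $i_1\ne 1$ and moves to level $2$ when $i_1=1$; your four-vertex configuration with "an edge realizing the local dimension" does not force a contradiction, because the unique basis element at level $i_1$ can still separate $(u_1,x)$ from $(u_1,x')$ via its $G$-coordinate. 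In (ii) the same problem recurs: after reducing to $i_{l+1}-i_l\ge D(G)+1$, your conclusion that an unresolved fibre "forces a basis vertex with first coordinate strictly between $i_l$ and $m$" is a non sequitur, since the basis element at level $i_{l+1}$ may lie at or within $D(G)$ of level $m$ and resolve the fibre through its $G$-coordinate (indeed, with your choice $m=i_l+D(G)+1$ one can have $i_{l+1}=m$, and then that element trivially distinguishes the pair containing its own second coordinate). The paper avoids this by taking the fibre at level $i_l+D(G)$, so that the level-$i_{l+1}$ element is at $P_t$-distance at least $1$, and by basing the edge at that element's second coordinate.

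Part (iii) is where the proposal breaks down most clearly. From (ii) with $l=1$ you get $i_3\le 2D(G)+i_1$, and the bound $2D(G)+i_1\le 2D(G)+1$ requires $i_1\le 1$; your "using $i_1\ge 1$" runs the inequality in the wrong direction. So (iii) follows from (ii) only when $i_1=1$, and the case $i_1>1$ is precisely where the paper has to do real work: it shows that the two basis elements $(u_{i_1},v_1),(u_{i_2},v_2)$ cannot jointly cover the level-$1$ fibre (when $i_1+i_2-2>d_G(v_1,v_2)$) or the level-$(D(G)+2)$ fibre (otherwise), by finding an edge $\alpha\sim\beta$ inside an intersection of closed balls ${\cal B}_{r_1}(v_1)\cap{\cal B}_{r_2}(v_2)$ whose radii are calibrated so that both elements see $\alpha$ and $\beta$ at equal distance, while every element with first coordinate $i_r\ge i_3>2D(G)+1$ is too far to help. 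Nothing in your sketch produces this argument, so (iii) is missing an essential idea, not just bookkeeping.
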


\begin{proof}
Let $B$ be a local metric basis of $P_t\boxtimes G$ and let $ u_{i_1},   u_{i_2}, \ldots , u_{i_b}$ be the first components of the elements in  $B$,  where $ {i_1} \le   {i_2}\le \cdots \le {i_b}$. First of all, notice that $|B|=b$ and, by Theorem \ref{generalUpperBound}, $b\ge 3$.  

We first proceed to prove (i). Suppose, for the contrary, that $i_2 > D(G)+1$. Let $y,z\in V(G)$ such that $(u_{i_1},y)\in B$ and $z\in N_G(y)$.
 If $i_1\ne 1$, then  no vertex in $B$ is able to 
distinguish  $(u_1,y)$ and $(u_1,z)$. Now, if $i_1=1$,
then no vertex in $B$ is able to 
distinguish $(u_{2},y)$ and $(u_2,z)$. 
So, in both cases we get a contradiction.
The proof of $i_{b-1}\ge t-D(G)$ is deduced by symmetry. Hence, (i) follows.

To prove (ii) we proceed  by contradiction. Suppose that $i_{l+2}> 2D(G)+i_l$ for some $l\in \{1,\dots, b-2\}$. In such a case we have that  $i_{l+1}> D(G)+i_l$ or  $i_{l+2}> D(G)+i_{l+1}$. We suppose that $i_{l+1}> D(G)+i_l$, being the second case analogous. We now take $y,z\in V(G)$ such that $(u_{i_{l+1}},y)\in B$ and $z\in N_G(y)$. Notice that $(u_{i_l+D(G)},y)$ and $(u_{i_l+D(G)},z)$ are adjacent.We differentiate the following cases
 for   $(u_{i_k},w)\in B$. If $k\le l$, then $i_l+D(G)-i_k\ge D(G)$ and so
$$d_{P_t\boxtimes G}((u_{i_k},w),(u_{i_l+D(G)},y))=i_l+D(G)-i_k=d_{P_t\boxtimes G}((u_{i_k},w),(u_{i_l+D(G)},z)).$$
If $k= l+1$ and $i_{l+1}\ne i_{l+2}$,  then  $w=y$ and since  $i_{l+1}> D(G)+i_l$, we have
$$d_{P_t\boxtimes G}((u_{i_k},w),(u_{i_l+D(G)},y))=i_k-i_l-D(G)=d_{P_t\boxtimes G}((u_{i_k},w),(u_{i_l+D(G)},z)).$$ 
If $k= l+1$ and $i_{l+1}=i_{l+2}$,  then from the assumption   $i_{l+2}> 2D(G)+i_l$  we have that $i_k-i_l-D(G)>D(G)$ and so 
$$d_{P_t\boxtimes G}((u_{i_k},w),(u_{i_l+D(G)},y))=i_k-i_l-D(G)=d_{P_t\boxtimes G}((u_{i_k},w),(u_{i_l+D(G)},z)).$$
If $k\ge   l+2$, then the assumption   $i_{l+2}> 2D(G)+i_l$ leads to $i_k-i_l-D(G)>D(G)$ and so 
$$d_{P_t\boxtimes G}((u_{i_k},w),(u_{i_l+D(G)},y))=i_k-i_l-D(G)=d_{P_t\boxtimes G}((u_{i_k},w),(u_{i_l+D(G)},z)).$$ 
Hence, no vertex in $B$ is able to distinguish  $(u_{i_l+D(G)},y)$ from $(u_{i_l+D(G)},z)$, which is a contradiction. Therefore, the proof of (ii) is complete.

Finally, we proceed to prove (iii). If $i_1=1$, then by (ii) we obtain $i_3\le 2D(G)+1$. Hence, we assume that $i_1>1$. For contradiction purposes,  suppose that $i_3> 2D(G)+1$.
  We differentiate two cases for $(u_{i_1},v_1), (u_{i_2},v_2)\in B$. 

\vspace{0.2cm}
\noindent Case 1: $i_1+i_2-2>d_G(v_1,v_2)$.  In this case 
$|{\cal B}_{ i_1-1}(v_1)\cap {\cal B}_{ i_2-1}(v_2)|\ge 2$ and so we take 
$\alpha, \beta \in {\cal B}_{i_1-1}(v_1)\cap {\cal B}_{ i_2-1}(v_2)$
such that $\alpha \sim \beta$.  For the pair of
adjacent vertices $(u_1,\alpha),(u_1,\beta)$  we have 
$$d_{P_t\boxtimes G}((u_{i_1},v_1),(u_1,\alpha))=i_1-1 =
d_{P_t\boxtimes G}((u_{i_1},v_1),(u_1,\beta))$$
and
$$d_{P_t\boxtimes G}((u_{i_2},v_2),(u_1,\alpha))=i_2-1 =
d_{P_t\boxtimes G}((u_{i_2},v_2),(u_1,\beta)).$$
So, neither $(u_{i_1},v_1)$ nor $(u_{i_2},v_2)$ distinguishes $(u_1,\alpha)$ from $(u_1,\beta)$.
Furthermore, for $i_r\ge i_3> 2D(G)+1$ and $(u_{i_r},v_r)\in B$ we have
$$d_{P_t\boxtimes G}((u_{i_r},v_r),(u_1,\alpha))= i_r-1 =d_{P_t\boxtimes G}((u_{i_r},v_r),(u_1,\beta)).$$ 
Therefore, no vertex  $(u_{i_r},v_r)\in B$ distinguishes
$(u_1,\alpha)$ from $(u_1,\beta)$,  which is a contradiction.

\vspace{0.2cm}
\noindent Case 2: $i_1+i_2 -2 \le d(v_{1},v_{2})$.    In this case we have 
$$(D(G)+2-i_1)+(D(G)+2-i_2) =2D(G)+2-(i_1+i_2-2) \ge
2D(G) +2 -d(v_1,v_2) \ge D(G)+2.$$
Hence, there exist $\alpha, \beta \in {\cal B}_{ D(G)+2 -i_1}(v_{ 1})\cap {\cal B}_{D(G)+2-i_2}(v_{ 2})$  such that $\alpha\sim\beta$.  
For the pair of 
adjacent vertices  $(u_{D(G)+2},\alpha),(u_{D(G)+2},\beta)$ we have 
$$d_{P_t\boxtimes G}((u_{i_1},v_{1}),(u_{D(G)+2},\alpha))=D(G)+2-i_1 =d_{P_t\boxtimes G}((u_{i_1},v_{1}),(u_{D(G)+2},\beta))$$
and
$$d_{P_t\boxtimes G}((u_{i_2},v_2),(u_{D(G)+2},\alpha))=D(G)+2 -i_2 =d_{P_t\boxtimes G}((u_{i_2},v_2),(u_{D(G)+2},\beta))$$
So, neither $(u_{i_1},v_1)$ nor $(u_{i_2},v_2)$ distinguishes $(u_{D(G)+2},\alpha)$ from $(u_{D(G)+2},\beta)$.
For $i_r\ge i_3> 2D(G)+1$ and $(u_{i_r},v_r)\in B$ 
we have
$$d_{P_t\boxtimes G}((u_{i_r},v_r),(u_{D(G)+2},\alpha))= i_r-(D(G)+2)=d_{P_t\boxtimes G}((u_{i_r},v_{r}),(u_{D(G)+2},\beta)).$$
Thus, no vertex $(u_{i_r},v_{r})\in B$ distinguishes
$(u_{D(G)+2},\alpha)$ from $(u_{D(G)+2},\beta)$, which  is a contradiction.
\end{proof}

\begin{theorem}\label{long path vs anyGraph}
For any connected $G$ and any integer  $t \ge 2D(G)+1$,
$$\dim_l(P_t\boxtimes G) \ge  \left\lceil\frac{t-1}{D(G)}\right\rceil+1.$$
 \end{theorem}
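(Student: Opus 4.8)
The plan is to derive the lower bound directly from Lemma~\ref{los extremos}, which already controls how the first components $u_{i_1}\le u_{i_2}\le\cdots\le u_{i_b}$ of a local metric basis $B$ of $P_t\boxtimes G$ are distributed along the path $P_t$. Write $D=D(G)$ and $b=|B|=\dim_l(P_t\boxtimes G)$. The three facts I will use are: (i) $i_2\le D+1$ and $i_{b-1}\ge t-D$; (ii) $i_{l+2}\le 2D+i_l$ for every $l\in\{1,\dots,b-2\}$; and the trivial bound $i_1\ge 1$, $i_b\le t$.

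First I would telescope the gap bound (ii). Starting from $l=1$ and iterating, one gets $i_{1+2m}\le i_1+2mD$ and $i_{2+2m}\le i_2+2mD$ for all admissible $m$; in particular, combining with $i_1\ge1$ and $i_2\le D+1$, both even- and odd-indexed subsequences satisfy $i_k \le 1+(k-1)D$ for every $k$ in a single unified bound obtained by interleaving the two chains — more carefully, $i_{k}\le i_{k-2}+2D$ repeatedly applied down to $i_1$ or $i_2$ yields $i_k\le (k-1)D+1$ once we use $i_1\ge1$ for odd $k$ and $i_2\le D+1$ for even $k$ (since $D+1+(\tfrac{k-2}{2})\cdot 2D=(k-1)D+1$). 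The point is that the largest index is not too large: $i_b\le (b-1)D+1$.

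Next I combine this with the lower constraint on the top of the path. From (i) we have $i_{b-1}\ge t-D$, and $i_b\ge i_{b-1}$, so actually I only need $i_b\ge i_{b-1}\ge t-D$; but to squeeze out the ``$+1$'' in the statement I would instead argue as follows. Apply the telescoped bound to index $b-1$ rather than $b$: $i_{b-1}\le (b-2)D+1$. Together with $i_{b-1}\ge t-D$ this gives $t-D\le (b-2)D+1$, i.e. $t-1\le (b-1)D$, hence $b\ge \frac{t-1}{D}+1$, and since $b$ is an integer, $b\ge \left\lceil\frac{t-1}{D}\right\rceil+1$. This is exactly the claimed inequality. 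The hypothesis $t\ge 2D+1$ is what guarantees $b\ge 3$ (via Theorem~\ref{generalUpperBound}) so that index $b-1$ with $b-1\ge 2$ actually exists and the parity bookkeeping in the telescoping step is legitimate; it also ensures $t-D\ge D+1\ge i_2$ so the two constraints are not vacuously compatible.

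The main obstacle I anticipate is the parity/interleaving bookkeeping in the telescoped bound: fact (ii) only relates $i_l$ to $i_{l+2}$, so the even-indexed and odd-indexed chains must be handled separately and then reconciled, and one must be careful that the correct base case ($i_1\ge1$ versus $i_2\le D+1$) is used for the parity of $b-1$; checking that both parities give $i_{b-1}\le (b-2)D+1$ and not something weaker by one is the delicate point. Everything else is a short computation.
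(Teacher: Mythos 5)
Your overall strategy---telescoping Lemma~\ref{los extremos}(ii) down to a base case and combining the result with $i_{b-1}\ge t-D(G)$ from part (i)---is exactly the paper's, but there is a genuine gap in your base case for the odd-indexed chain. Part (ii) bounds $i_{l+2}$ in terms of $i_l$, so telescoping an \emph{upper} bound on $i_{b-1}$ requires an \emph{upper} bound at the bottom of the chain. When $b$ is odd (so $b-1$ is even) you correctly anchor at $i_2\le D(G)+1$, which is part (i). But when $b$ is even (so $b-1$ is odd) you propose to anchor at ``$i_1\ge 1$''; that is a \emph{lower} bound on $i_1$ and cannot yield the upper bound $i_{b-1}\le i_1+(b-2)D(G)\le (b-2)D(G)+1$ unless $i_1=1$, which is not guaranteed for a local metric basis. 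The best unconditional upper bound on $i_1$ you have is $i_1\le i_2\le D(G)+1$, which telescopes only to $i_{b-1}\le (b-1)D(G)+1$ and hence gives $b\ge\frac{t-1}{D(G)}$, one short of the claim. You even flag this parity issue as ``the delicate point,'' but the resolution you offer does not work.

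This is precisely why the paper includes part (iii) of Lemma~\ref{los extremos}, namely $i_3\le 2D(G)+1$: it is the correct anchor for the odd-indexed chain, valid even when $i_1>1$, and establishing it is the nontrivial two-case argument that occupies most of the lemma's proof. Since you list only (i), (ii) and the trivial bounds among the facts you use, your argument as written does not close the even-$b$ case. Replacing the appeal to $i_1\ge 1$ by an appeal to Lemma~\ref{los extremos}(iii) repairs the proof and makes it coincide with the paper's.
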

 
 \begin{proof} 
Let $B$ be a local metric basis of $P_t\boxtimes G$ and let $ u_{i_1},   u_{i_2}, \ldots , u_{i_b}$ be the first components of the elements in  $B$,  where $ {i_1} \le   {i_2}\le \cdots \le {i_b}$.  We differentiate two cases.

\vspace{0.2cm}
\noindent Case 1. $b$ odd. In this case $b-1$ is even and by Lemma \ref{los extremos} (i) and (ii) we have
$$i_2\le D(G)+1,\;i_4\le 3D(G)+1 ,\;\dots ,\; i_{b-1}\le (b-2)D(G)+1.$$ 

\vspace{0.2cm}
\noindent Case 2. $b$ even.  In this case $b-1$ is odd and by Lemma \ref{los extremos}  (iii) and (ii) we have 
$$i_3\le 2D(G)+1, \; i_5\le 4D(G)+1,\; \dots ,\; i_{b-1}\le (b-2)D(G)+1.$$ 
 According to the two cases above and Lemma \ref{los extremos} (i) we have $$t-D(G)\le i_{b-1}\le (b-2)D(G)+1.$$ 
 Therefore,   $b\ge \frac{t-1}{D(G)}+1$.
 \end{proof}

 From now on we say that a set $W\subset V(G\boxtimes H)$ \emph{resolves}   the set $X\subseteq V(G\boxtimes H)$ if every  pair of adjacent vertices in  $ X$ is distinguished by some element in $W$.

\begin{lemma}\label{resolving triads}
Let $G$ and $H$ be two connected nontrivial graphs such that $H$ is bipartite. Let
$u_1,u_2,u_3 \in V(G)$ and  $v_1,v_2\in V(H)$ such that $u_2\in I_G[u_1,u_3]$,
 $d_G(u_1,u_2)\le d_H(v_1,v_2)= D(H)$ and $d_G(u_2,u_3)\ge D(H)$.
Then, for any shortest path $P$ from $u_1$ to $u_2$, the set $B=\{(u_1,v_1),(u_2,v_2),(u_3,v_1)\}$ resolves $V(P)\times V(H)$.
\end{lemma}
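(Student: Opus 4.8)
The plan is to exploit the strong-product distance formula $d_{G\boxtimes H}((a,b),(c,d))=\max\{d_G(a,c),d_H(b,d)\}$ together with the observation that, since $P$ is a shortest $u_1$–$u_2$ path and $u_2\in I_G[u_1,u_3]$, every vertex of $P$ lies on a shortest $u_1$–$u_3$ path. Write $P\colon u_1=w_0,w_1,\dots,w_m=u_2$ with $m=d_G(u_1,u_2)$, and set $n=d_G(u_2,u_3)$; the hypotheses then read $m\le D(H)=d_H(v_1,v_2)\le n$.

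First I would show that the single vertex $(u_3,v_1)$ already resolves every adjacent pair in $V(P)\times V(H)$ whose first coordinates differ. Since $w_i\in I_G[u_1,u_2]$ and $u_2\in I_G[u_1,u_3]$, splicing $P$ into a shortest $u_1$–$u_3$ path through $u_2$ gives $d_G(w_i,u_3)=d_G(u_1,u_3)-i=m+n-i$. Because $i\le m$, this is at least $n\ge D(H)\ge d_H(v_1,y)$ for every $y\in V(H)$, so $d_{G\boxtimes H}((u_3,v_1),(w_i,y))=m+n-i$, a value depending only on the index $i$. As the vertices of the shortest path $P$ are pairwise distinct, any two vertices $(w_i,y)$, $(w_j,y')$ with $w_i\ne w_j$ are thus distinguished by $(u_3,v_1)$.

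It then remains to handle pairs with a common first coordinate, i.e.\ $(w_i,y)\sim(w_i,y')$ with $y\sim y'$ in $H$; here $(u_3,v_1)$ is useless, so I must show $(u_1,v_1)$ or $(u_2,v_2)$ distinguishes them. Put $a=d_H(v_1,y)$, $a'=d_H(v_1,y')$, $b=d_H(v_2,y)$, $b'=d_H(v_2,y')$. Since $H$ is bipartite and $y\sim y'$, the distances from any fixed vertex to $y$ and to $y'$ have opposite parity, hence $|a-a'|=|b-b'|=1$. Using the distance formula, $(u_1,v_1)$ fails to distinguish the pair exactly when $\max\{i,a\}=\max\{i,a'\}$, which (as $a\ne a'$) forces $i\ge\max\{a,a'\}$; likewise $(u_2,v_2)$ fails exactly when $m-i\ge\max\{b,b'\}$. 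If both failed, then $i\ge a$ and $m-i\ge b$, so $m\ge a+b\ge d_H(v_1,v_2)=D(H)\ge m$, and equality throughout yields $i=a$; running the same argument with $a',b'$ gives $i=a'$, contradicting $|a-a'|=1$. Hence some vertex of $B$ distinguishes every adjacent pair, so $B$ resolves $V(P)\times V(H)$.

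I expect the only mildly delicate point to be the bookkeeping in the last paragraph: correctly characterizing when $\max\{i,a\}=\max\{i,a'\}$ and then squeezing the triangle inequality $a+b\ge D(H)$ against the hypothesis $m\le D(H)$ to force $i=a=a'$. Everything else is a direct computation with the strong-product distance formula.
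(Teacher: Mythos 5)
Your proof is correct and takes essentially the same route as the paper's: $(u_3,v_1)$ handles adjacent pairs with distinct first coordinates because $d_G(u_3,w_i)\ge D(H)$ dominates the maximum on $V(P)$, and for pairs with a common first coordinate the same key inequality $d_H(v_1,y)+d_H(y,v_2)\ge D(H)\ge d_G(u_1,w_i)+d_G(w_i,u_2)$ forces one of $(u_1,v_1)$, $(u_2,v_2)$ to distinguish them. The only cosmetic difference is that you argue the second case by contradiction (assuming both vertices fail), whereas the paper argues it directly from the disjunction $d_H(v_1,y)\ge d_G(u_1,w_i)$ or $d_H(y,v_2)>d_G(w_i,u_2)$.
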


\begin{proof}
Let  $P$ be a shortest path form $u_1$ to $u_2$ and let 
$(u_i,v_j),(u_k,v_l)\in 
V(G\boxtimes H)$ be two adjacent vertices 
such that $u_i,u_k\in V(P)$. Without lost of generality, we assume that $d_G(u_i,u_1)\le d_G(u_k,u_1)$. Notice that from this assumption we have that $d_G(u_i,u_3)\ge d_G(u_k,u_3)$.
We differentiate  the following two cases:

\vspace{0.2cm}
\noindent Case 1:  $u_i\sim u_k$.  As $d_G(u_2,u_3)\ge D(H)$ and $u_i,u_k\in V(P)$, we have $D(H)\le  d_G(u_3,u_k) < d_G(u_3,u_i)$ and so 
  $d_{G\boxtimes H}((u_3,v_1),(u_i,v_j))= d_G(u_3,u_i)> d_G(u_3,u_k)
   = d_{G\boxtimes H}((u_3,v_1),(u_k,v_l)).$

\vspace{0.2cm}
\noindent Case 2:  $i=k$. In this case $v_j\sim v_l$ and, as $H$ is a bipartite graph, 
$d_H(v_1,v_j)\ne d_H(v_1,v_l)$ and $d_H(v_2,v_j)\ne d_H(v_2,v_l)$. We assume,
without lost of generality, that $d_H(v_1,v_j)< d_H(v_1,v_l)$.
Notice that 
$$d_H(v_1,v_j)+d_H(v_j,v_2)\ge d_H(v_1,v_2)=D(H) \ge d_G(u_1,u_2)= d_G(u_1,u_i) + d_G(u_i,u_2).$$
Hence, $d_H(v_1,v_j)\ge d_G(u_1,u_i)$ or $d_H(v_j,v_2) > 
d_G(u_2,u_i)$. 
 If $d_H(v_1,v_j)\ge d_G(u_1,u_i)$, then 
$$d_{G\boxtimes H}((u_1,v_1),(u_i,v_j))= d_H(v_1,v_j)<d_H(v_1,v_l) = d_{G\boxtimes H}((u_1,v_1),(u_k,v_l)).$$
\noindent Now, if $d_H(v_j,v_2) > d_G(u_2,u_i)$, then $d_H(v_l,v_2) \ge  d_G(u_2,u_i)=d_G(u_2,u_k)$ and so
 $$d_{G\boxtimes H}((u_2,v_2),(u_i,v_j))= d_H(v_2,v_j)\ne d_H(v_2,v_l) = d_{G\boxtimes H}((u_2,v_2),(u_k,v_l)).$$
 According to the cases above, the result follows.
\end{proof}

\begin{theorem}\label{long path vs bipartite}
For any connected bipartite graph $G$ and any integer  $t \ge 2D(G)+1$,
$$\dim_l(P_t\boxtimes G) = \left\lceil\frac{t-1}{D(G)}\right\rceil+1.$$
 \end{theorem}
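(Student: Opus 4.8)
The plan is to prove the two inequalities separately. The lower bound $\dim_l(P_t\boxtimes G)\ge \lceil (t-1)/D(G)\rceil +1$ is already established for all connected $G$ in Theorem~\ref{long path vs anyGraph}, so the entire work lies in constructing a local metric generator of the claimed size when $G$ is bipartite. Set $D=D(G)$ and $b=\lceil (t-1)/D\rceil +1$; I want to exhibit a set $B\subset V(P_t\boxtimes G)$ with $|B|=b$ that resolves all of $V(P_t\boxtimes G)$.

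The construction I would use places the first coordinates of the chosen vertices at positions $1, 1+D, 1+2D, \dots$ along the path, i.e. at indices $i_\ell = \min\{1+(\ell-1)D,\, t\}$ for $\ell=1,\dots,b$, so that consecutive chosen indices differ by at most $D$ and the last one is exactly $t$ (a short check on $b$ shows $1+(b-1)D\ge t$ while $1+(b-2)D<t$, so the indices are genuine and only the last may need truncation). For the second coordinates I would alternate between two vertices $v_1,v_2\in V(G)$ realizing the diameter, $d_G(v_1,v_2)=D$: assign $v_1$ to odd-indexed choices and $v_2$ to even-indexed choices (the precise alternation pattern, and whether $t$ lands on an even or odd slot, will need a little care but is routine). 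The point of this is that between any two consecutive chosen vertices $(u_{i_\ell},v)$ and $(u_{i_{\ell+1}},v')$ the hypotheses of Lemma~\ref{resolving triads} are met: the segment of $P_t$ between the two indices has length $\le D = d_G(v,v')$ (using also $d_{P_t}=|$difference of indices$|$ since $P_t$ is a path), and one needs a third vertex further along at distance $\ge D$, which is exactly the next chosen first-coordinate or an endpoint. Applying Lemma~\ref{resolving triads} to each consecutive triple shows $B$ resolves the ``slab'' $V(P_{[i_\ell,i_{\ell+1}]})\times V(G)$; since these slabs cover $V(P_t)\times V(G)$ and any edge of $P_t\boxtimes G$ has both endpoints with first coordinates in a common slab (two consecutive path vertices always lie in one slab), $B$ resolves everything. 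Hence $\dim_l(P_t\boxtimes G)\le b$, and combined with Theorem~\ref{long path vs anyGraph} we get equality.

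The main obstacle I anticipate is bookkeeping at the two ends of the path and making the alternation of $v_1,v_2$ compatible with Lemma~\ref{resolving triads} at every consecutive pair simultaneously. Lemma~\ref{resolving triads} as stated requires, for a triple $(u_1,v_1),(u_2,v_2),(u_3,v_1)$, that the \emph{outer} two second-coordinates agree (both $v_1$) while the middle one is $v_2$; so when I slide a window of three consecutive chosen vertices along, I must ensure that whichever pattern of $v_1,v_2$ I fixed does yield such a configuration for each window, possibly by symmetry (swapping the roles of $v_1$ and $v_2$, which is legitimate since both realize the diameter) or by checking that $d_G(u_2,u_3)\ge D$ holds with $u_3$ taken to be the chosen index two steps ahead rather than one. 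A second, minor point is the boundary case where truncation makes $i_{b-1}$ and $i_b$ closer than $D$, or where $t-1$ is exactly divisible by $D$; these should be handled by noting the relevant distance inequalities only get easier, but they must be stated. I would also double-check the edge case of small $t$ relative to $D$, though the hypothesis $t\ge 2D(G)+1$ guarantees $b\ge 3$ and gives enough room for the argument to run.
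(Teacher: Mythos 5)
Your proposal is correct and follows essentially the same route as the paper: the lower bound is quoted from Theorem~\ref{long path vs anyGraph}, and the upper bound comes from placing generator vertices at path positions $1, D(G)+1, 2D(G)+1,\dots$ (with the last truncated to $t$), alternating two diametral vertices of $G$ in the second coordinate, and applying Lemma~\ref{resolving triads} to consecutive triples to resolve the overlapping slabs. The bookkeeping issues you flag (the $x,y,x$ pattern in each window, the shortened final slab, and the case $D(G)\mid(t-1)$) are exactly the ones the paper resolves by its explicit case analysis on the parity of $\alpha=\left\lfloor\frac{t-1}{D(G)}\right\rfloor$ and on whether $\alpha=\frac{t-1}{D(G)}$, and they work out as you anticipate.
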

\begin{proof}
Let $G$ and $P_t$ be as in the hypotheses. From $\alpha=\left\lfloor \frac{t-1}{D(G)}\right\rfloor$ and two diametral vertices  $a,b\in V(G)$
we define a set $B_{\alpha}$ as follows.  

If $\alpha=\frac{t-1}{D(G)}$,  then 
$$B_{\alpha}=\{(u_1,a), (u_{D(G)+1},b),(u_{2D(G)+1},a),(u_{3D(G)+1},b),\dots,(u_{\alpha D(G)+1},b)\}$$ 
for  $\alpha$ is odd and
$$B_{\alpha}=\{(u_1,a), (u_{D(G)+1},b),(u_{2D(G)+1},a),(u_{3D(G)+1},b),\dots,(u_{\alpha D(G)+1},a)\}$$ 
for   $\alpha$  even.

If $\alpha<\frac{t-1}{D(G)}$,  then 
$$B_{\alpha}=\{(u_1,a), (u_{D(G)+1},b),(u_{2D(G)+1},a),(u_{3D(G)+1},b),\dots,(u_{\alpha D(G)+1},b),(u_t,a)\}$$ 
for  $\alpha$   odd and
$$B_{\alpha}=\{(u_1,a), (u_{D(G)+1},b),(u_{2D(G)+1},a),(u_{3D(G)+1},b),\dots,(u_{\alpha D(G)+1},a),(u_t,b)\}$$ 
for  $\alpha$   even.
We would point out that, in any case, $|B_{\alpha}|= \left\lceil\frac{t-1}{D(G)}\right\rceil + 1 $.

We will show that   
$B_{\alpha}$ is a local metric generator for $P_t\boxtimes G$.
In order to see that, let $(u_i,v_j)$ and  
$(u_k,v_l)$ be two adjacent   vertices belonging to  $V(P_t\boxtimes G)- B_{\alpha}$. We consider,  without lost of generality, that  $i\le k$ and we differentiate  the following three cases for $k$.
\begin{itemize}
\item $1\le k\le D(G)+1$. Let $T_1=\{u_1,\dots ,u_{D(G)+1}\}\times V(G)$. 
In this case   $(u_i,v_j),(u_k,v_l)\in T_1$
 and, by Lemma \ref{resolving triads} the set $\{(u_1,a),(u_{D(G)+1},b),(u_{2D(G)+1},a)\}\subset B_{\alpha}$
resolves  $T_1$.

\item $pD(G)+2\le k\le (p+1)D(G)+1$, for some integer $p\in \{1,...,\alpha -1\}$.  
Let $T_p=\{u_{pD(G)+1},\dots ,u_{(p+1)D(G)+1}\}\times V(G)$. 
In this case   $(u_i,v_j),(u_k,v_l)\in T_p$
 and we can take $x,y\in \{a,b\}$ so that $X_p=\{(u_{(p-1)D(G)+1 },x),(u_{pD(G)+1},y),(u_{(p+1)D(G)+1},x)\} $ is a subset of $B_{\alpha}$. Thus,  by Lemma \ref{resolving triads} we can conclude that $X_p$
resolves  $T_p$.

\item $\alpha D(G)+2\le k\le t$. 
Let $T_{t}=\{u_{\alpha D(G)+1},\dots ,u_{t}\}\times V(G)$. 
As above,   $(u_i,v_j),(u_k,v_l)\in T_t$
 and we can take $x,y\in \{a,b\}$ so that the set $X_t=\{(u_{(\alpha -1)D(G)+1},x),(u_{\alpha D(G)+1},y),(u_{t},x)\}$  is a subset of $ B_{\alpha}$. Thus,  by Lemma \ref{resolving triads} we can conclude that   $X_t$
resolves  $T_t$.
\end{itemize}
According to the three  cases above we have $\dim_l(P_t\boxtimes G) \le \left\lceil\frac{t-1}{D(G)}\right\rceil+1.$ Therefore, by Theorem \ref{long path vs anyGraph} we conclude the proof.
\end{proof}
 
The authors of \cite{Rodriguez-Velazquez-et-al2014} conjectured that 
for any integers $t$ and $t'$ such that $2\le t'< t$, the metric dimension of $P_{t} \boxtimes P_{t'}$ equals
$\left\lceil\frac{t+t'-2}{t'-1}\right\rceil.$  We are now  able to  prove the conjecture.

\begin{theorem}
For any integers $t$ and $t'$ such that $2\le t'< t$,
$$ \dim(P_{t} \boxtimes P_{t'})= \left\lceil\frac{t+t'-2}{t'-1}\right\rceil.$$
\end{theorem}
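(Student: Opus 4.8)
The plan is to pinch $\dim(P_t\boxtimes P_{t'})$ between the two sides of the claimed identity, exploiting that $P_{t'}$ is a connected bipartite graph with diameter $D(P_{t'})=t'-1$, and the elementary observation that every metric generator of a connected graph is in particular a local metric generator, so that $\dim_l(X)\le\dim(X)$ for every connected graph $X$.

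For the lower bound I would show $\dim(P_t\boxtimes P_{t'})\ge\dim_l(P_t\boxtimes P_{t'})=\left\lceil\frac{t+t'-2}{t'-1}\right\rceil$, distinguishing two ranges. If $t\ge 2t'-1=2D(P_{t'})+1$, then Theorem \ref{long path vs bipartite} applied to $G=P_{t'}$ gives $\dim_l(P_t\boxtimes P_{t'})=\left\lceil\frac{t-1}{t'-1}\right\rceil+1$; since $\lceil x\rceil+1=\lceil x+1\rceil$ for every real $x$, this equals $\left\lceil\frac{t-1}{t'-1}+1\right\rceil=\left\lceil\frac{t+t'-2}{t'-1}\right\rceil$. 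If instead $t'<t<2t'-1$, then $2(t'-1)<t+t'-2<3(t'-1)$, hence $\left\lceil\frac{t+t'-2}{t'-1}\right\rceil=3$, while the three-element local metric generator $\{(u_1,v_1),(u_{t'},v_{t'}),(u_t,v_1)\}$ exhibited right after Theorem \ref{generalUpperBound}, together with the bound $\dim_l(G\boxtimes H)\ge 3$ of that theorem, yields $\dim_l(P_t\boxtimes P_{t'})=3$. In either case $\dim_l(P_t\boxtimes P_{t'})=\left\lceil\frac{t+t'-2}{t'-1}\right\rceil$, and the inequality $\dim\ge\dim_l$ finishes this direction.

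For the matching upper bound I would exhibit a metric generator of $P_t\boxtimes P_{t'}$ of cardinality $\left\lceil\frac{t+t'-2}{t'-1}\right\rceil$, which is the value conjectured to be exact in \cite{Rodriguez-Velazquez-et-al2014}; a natural candidate is the set $B_{\alpha}$ built in the proof of Theorem \ref{long path vs bipartite} (respectively the three-element set above when $t<2t'-1$). The task is to verify that it distinguishes \emph{every} pair of distinct vertices, not only adjacent ones: using the Chebyshev formula $d_{P_t\boxtimes P_{t'}}((u_i,v_j),(u_k,v_l))=\max\{|i-k|,\,|j-l|\}$, one recovers the first coordinate $i$ from the distances to two anchors lying two ``blocks'' apart along $P_t$, after which the remaining anchor distances pin down $j$; the first and last blocks are handled by a short separate argument. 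Combining the two bounds gives the equality and settles the conjecture. The only real obstacle is precisely this upper-bound verification — upgrading a local metric generator produced by the machinery of the previous section to a genuine one — since the lower bound is an immediate consequence of Theorem \ref{long path vs bipartite} and $\dim\ge\dim_l$.
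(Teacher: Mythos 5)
Your lower-bound argument is exactly the paper's: the case $t\ge 2t'-1$ follows from Theorem \ref{long path vs bipartite} applied to $G=P_{t'}$ (with $D(P_{t'})=t'-1$) together with $\dim\ge\dim_l$ and the identity $\left\lceil\frac{t-1}{t'-1}\right\rceil+1=\left\lceil\frac{t+t'-2}{t'-1}\right\rceil$, and the range $t'<t<2t'-1$ reduces to $\dim_l=3=\left\lceil\frac{t+t'-2}{t'-1}\right\rceil$ via the remarks surrounding Theorem \ref{generalUpperBound}. That part is correct and complete.

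The gap is the upper bound. You propose to exhibit a metric generator of cardinality $\left\lceil\frac{t+t'-2}{t'-1}\right\rceil$ by upgrading the local metric generator $B_{\alpha}$ (or the three-point set when $t<2t'-1$) to a genuine one, but you only sketch the verification and explicitly concede that "the only real obstacle is precisely this upper-bound verification." The machinery of Lemma \ref{resolving triads} and Theorem \ref{long path vs bipartite} only controls \emph{adjacent} pairs, so nothing in this paper lets you conclude that $B_{\alpha}$ distinguishes arbitrary pairs; checking that with the Chebyshev metric $\max\{|i-k|,|j-l|\}$ is a separate, nontrivial case analysis that your proposal defers rather than carries out. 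The paper avoids this entirely: the inequality $\dim(P_t\boxtimes P_{t'})\le\left\lceil\frac{t+t'-2}{t'-1}\right\rceil$ was already proved in \cite{Rodriguez-Velazquez-et-al2014} (the same source that posed the conjecture), and the paper simply cites it, so the only new content needed is the lower bound you already have. Either invoke that reference for the upper bound, or actually complete the resolving-all-pairs verification; as written, the proof is not self-contained.
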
 

\begin{proof}
As pointed out in Section \ref{Bounds}, for $t'\le t\le 2t'-1 $,  $\dim_l(P_t\boxtimes P_{t'})=3$. Now, 
 since $\dim_l(P_{t} \boxtimes P_{t'})\le \dim(P_{t} \boxtimes P_{t'})$, if $t\ge 2t'-1 $, then by  Theorem \ref{long path vs bipartite} we obtain the lower bound $ \dim(P_{t} \boxtimes P_{t'})\ge  \left\lceil\frac{t+t'-2}{t'-1}\right\rceil.$  The upper bound was obtained in \cite{Rodriguez-Velazquez-et-al2014}. Therefore, the result follows.
\end{proof}

 \section{The Particular Case of  $C_t\boxtimes G$}
 
In this section we assume that  $t$ is an integer greater than or equal to three and $V(C_t)=\{u_1,u_2,\dots,u_t\}$, where $u_1\sim u_t$ and  $u_i\sim u_{i+1}$, for every $i\in \{1,\dots , t-1\}$.

\begin{lemma}\label{los extremosCiclos}
Let $G$ be a connected graph  and let $t\ge 3$ be an integer. Let  $ u_{i_1},   u_{i_2}, \ldots ,  u_{i_b}$ be the first components of the elements in a local metric basis of $C_t\boxtimes G$, where $ {i_1}\le   {i_2}\le \cdots \le {i_b}$.
Then for any $l\in \{1,\dots, b\}$, $d_{C_t}(u_{i_{l+2}},u_{i_l}) \le 2D(G)$, where the subscripts of $i$ are taken modulo $b$. 
\end{lemma}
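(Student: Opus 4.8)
The plan is to adapt the argument of Lemma~\ref{los extremos}(ii) to the cyclic setting; the one genuinely new feature is that a cycle has two arcs between any pair of vertices, so a contradiction has to be extracted using both of them.

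First I would note that there is nothing to prove when $t\le 2D(G)$, since then $d_{C_t}(x,y)\le\lfloor t/2\rfloor\le D(G)$ for all $x,y\in V(C_t)$. So assume $t\ge 2D(G)+1$, fix a local metric basis $B$ of $C_t\boxtimes G$ with first components $u_{i_1},\dots,u_{i_b}$ (so $b=|B|\ge 3$ by Theorem~\ref{generalUpperBound}), and suppose for contradiction that $d_{C_t}(u_{i_{l+2}},u_{i_l})>2D(G)$ for some $l$. Then \emph{both} arcs of $C_t$ joining $u_{i_l}$ and $u_{i_{l+2}}$ have length greater than $2D(G)$. Let $A$ be the arc containing $u_{i_{l+1}}$, and let $a$ and $c$ be the lengths of its pieces from $u_{i_l}$ to $u_{i_{l+1}}$ and from $u_{i_{l+1}}$ to $u_{i_{l+2}}$, so $a+c=|A|>2D(G)$ and hence $a>D(G)$ or $c>D(G)$. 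Up to reversing the orientation of $C_t$ (which exchanges $u_{i_l}$ with $u_{i_{l+2}}$ and $a$ with $c$ while leaving the hypothesis untouched) I may assume $a>D(G)$.

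Now, as in the proof of Lemma~\ref{los extremos}(ii), I would produce an adjacent pair that $B$ fails to resolve. Let $u_m$ be the vertex of $A$ at distance $D(G)$ from $u_{i_l}$ on the side of $u_{i_{l+1}}$; since $D(G)<a$, the vertex $u_m$ lies strictly between the consecutive basis first-components $u_{i_l}$ and $u_{i_{l+1}}$, so $u_m\notin\{u_{i_1},\dots,u_{i_b}\}$ and hence $(u_m,w)\notin B$ for all $w\in V(G)$. Choose $y\in V(G)$ with $(u_{i_{l+1}},y)\in B$ and $z\in N_G(y)$; then $(u_m,y)\sim (u_m,z)$ in $C_t\boxtimes G$. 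By the distance formula for the strong product, for any $(u_{i_k},w)\in B$ and any $\xi\in\{y,z\}$ we have $d_{C_t\boxtimes G}((u_{i_k},w),(u_m,\xi))=\max\{d_{C_t}(u_{i_k},u_m),d_G(w,\xi)\}$, while $d_G(w,y),d_G(w,z)\le D(G)$. The crux is the inequality $d_{C_t}(u_{i_k},u_m)\ge D(G)$ whenever $u_{i_k}\ne u_{i_{l+1}}$: if $u_{i_k}=u_{i_l}$ then $d_{C_t}(u_{i_k},u_m)=\min\{D(G),t-D(G)\}=D(G)$; and if $u_{i_k}$ equals $u_{i_{l+2}}$ or lies on the arc complementary to $A$, then the path from $u_m$ through $u_{i_{l+1}}$ to $u_{i_k}$ has length at least $a+c-D(G)>D(G)$, while the other path, running $u_m\to u_{i_l}\to\cdots\to u_{i_k}$, has length at least $D(G)+1>D(G)$. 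For all such $k$ the two distances above equal $d_{C_t}(u_{i_k},u_m)$, so $(u_{i_k},w)$ does not distinguish $(u_m,y)$ from $(u_m,z)$. Finally, for $u_{i_k}=u_{i_{l+1}}$: either $d_{C_t}(u_{i_{l+1}},u_m)\ge D(G)$ and we conclude as above, or $d_{C_t}(u_{i_{l+1}},u_m)=a-D(G)<D(G)$, whence $a<2D(G)$, which together with $a+c>2D(G)$ gives $c\ge 1$; then (using also $a>0$) the value $u_{i_{l+1}}$ occurs exactly once among the first components, so $w=y$ and
$$d_{C_t\boxtimes G}((u_{i_{l+1}},y),(u_m,y))=a-D(G)=d_{C_t\boxtimes G}((u_{i_{l+1}},y),(u_m,z)),$$
the last equality because $a-D(G)\ge 1=d_G(y,z)$. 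Hence no vertex of $B$ distinguishes the adjacent pair $(u_m,y),(u_m,z)$, contradicting that $B$ is a local metric generator; therefore $d_{C_t}(u_{i_{l+2}},u_{i_l})\le 2D(G)$.

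I expect the main obstacle to be exactly this last case analysis: one has to be sure $u_m$ lies at distance at least $D(G)$ from \emph{every} basis first-component other than $u_{i_{l+1}}$ (this is where $|A|=a+c>2D(G)$ is used, and where one must not overlook the first components lying on the arc complementary to $A$), and one has to dispose of the degenerate configurations in which one of $u_{i_l},u_{i_{l+1}},u_{i_{l+2}}$ is repeated among the first components --- there the ``occurs exactly once'' observation and the ``$w=y$'' device take the place of the missing distance slack.
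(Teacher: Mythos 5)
Your proof is correct and follows essentially the same strategy as the paper's: assume $d_{C_t}(u_{i_{l+2}},u_{i_l})>2D(G)$, deduce that one of the two consecutive gaps exceeds $D(G)$, and exhibit the undistinguished adjacent pair $(u_m,y),(u_m,z)$ at the vertex $D(G)$ steps from $u_{i_l}$ toward $u_{i_{l+1}}$. Your treatment of the two arcs, of the basis elements on the complementary arc, and of the degenerate repeated-first-component cases is a more explicit rendering of the case analysis the paper carries out (there split according to whether $i_{l+1}=i_{l+2}$), but the underlying argument is the same.
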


\begin{proof}
Let  $B$ be a 
local metric basis of $C_t\boxtimes G$ and let $ u_{i_1},   u_{i_2}, \ldots , u_{i_b}$ be the first components of the elements in $B$, where $ {i_1}=1\le   {i_2}\le \cdots \le {i_b}$. First of all, notice that $|B|=b$ and, by Theorem \ref{generalUpperBound}, $b\ge 3$.  

We proceed  by contradiction. Suppose that $d_{C_t}(u_{i_{l+2}},u_{i_l})> 2D(G)$ for some $l\in \{1,\dots, b\}$. In such a case we have that  $d_{C_t}(u_{i_{l+1}},u_{i_l})> D(G)$ or  $d_{C_t}(u_{i_{l+2}},u_{i_{l+1}})> D(G)$. We suppose that $d_{C_t}(u_{i_{l+1}},u_{i_l})> D(G)$, being the second case analogous. We now take $y,z\in V(G)$ such that $(u_{i_{l+1}},y)\in B$ and $z\in N_G(y)$. Notice that $(u_{i_l+D(G)},y)$ and $(u_{i_l+D(G)},z)$ are adjacent. We differentiate the following cases
 for   $(u_{i_k},w)\in B$. If $k\ne l+1$, then $d_{C_t}(u_{i_l+D(G)},u_{i_k})\ge D(G)$ and so
$$d_{C_t\boxtimes G}((u_{i_k},w),(u_{i_l+D(G)},y))=d_{C_t}(u_{i_l+D(G)},u_{i_k})=d_{C_t\boxtimes G}((u_{i_k},w),(u_{i_l+D(G)},z)).$$
If $k= l+1$ and $i_{l+1}\ne i_{l+2}$  then  $w=y$ and since  $d_{C_t}(u_{i_{l+1}},u_{i_l})> D(G)$, we have
$$d_{C_t\boxtimes G}((u_{i_k},w),(u_{i_l+D(G)},y))=d_{C_t}(u_{i_k},u_{i_l+D(G)})=d_{C_t\boxtimes G}((u_{i_k},w),(u_{i_l+D(G)},z)).$$ 
If $k= l+1$ and $i_{l+1}=i_{l+2}$  then from the assumption  $d_{C_t}(u_{i_{l+2}},u_{i_l})> 2D(G)$  we have that $d_{C_t}(u_{i_k},u_{i_l+D(G)})>D(G)$ and so 
$$d_{C_t\boxtimes G}((u_{i_k},w),(u_{i_l+D(G)},y))=d_{C_t}(u_{i_k},u_{i_l+D(G)})=d_{C_t\boxtimes G}((u_{i_k},w),(u_{i_l+D(G)},z)).$$
Hence, no vertex in $B$ is able to distinguish the  adjacent vertices $(u_{i_l+D(G)},y)$ and $(u_{i_l+D(G)},z)$, which is a contradiction. Therefore, the proof is complete.
\end{proof}

\begin{theorem}\label{long Cycle vs anyGraph}
For any connected graph $G$ and any integer  $t\ge 1$,
$$\dim_l(C_t\boxtimes G) \ge  \left\lceil\frac{t}{D(G)}\right\rceil.$$
 \end{theorem}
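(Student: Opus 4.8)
The plan is to argue, as in the proof of Theorem \ref{long path vs anyGraph}, by looking at the first-coordinate multiset of a local metric basis $B$ of $C_t\boxtimes G$ and invoking the cyclic gap bound of Lemma \ref{los extremosCiclos}. Let $u_{i_1}, u_{i_2}, \ldots, u_{i_b}$ be the first components of the $b=|B|$ elements of $B$, indexed so that $i_1 \le i_2 \le \cdots \le i_b$, and recall $b \ge 3$ by Theorem \ref{generalUpperBound}. The key structural fact is that as we walk around the cycle through the consecutive positions $u_{i_1}, u_{i_2}, \ldots, u_{i_b}, u_{i_1}$, every \emph{second} step advances by at most $2D(G)$ in the cycle metric; summing these second-step bounds around the full circle should force $b$ to be large relative to $t/D(G)$.

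First I would make the gap statement precise: the $b$ arcs $A_l$ joining $u_{i_l}$ to $u_{i_{l+1}}$ (indices mod $b$) along the cycle in the positive direction partition $C_t$, so $\sum_{l=1}^{b} d_{C_t}(u_{i_l}, u_{i_{l+1}}) \ge t$ (it equals $t$ when all these consecutive hops are taken in the same rotational sense, which one can arrange, or is $\ge t$ in general since the arcs cover the cycle). Next, Lemma \ref{los extremosCiclos} gives $d_{C_t}(u_{i_{l+2}}, u_{i_l}) \le 2D(G)$ for every $l$, and since $d_{C_t}$ is the shortest-path metric on the cycle we have $d_{C_t}(u_{i_l},u_{i_{l+1}}) + d_{C_t}(u_{i_{l+1}},u_{i_{l+2}}) \ge d_{C_t}(u_{i_l}, u_{i_{l+2}})$ is the wrong direction — so instead I would note that consecutive arcs $A_l$ and $A_{l+1}$ together form an arc from $u_{i_l}$ to $u_{i_{l+2}}$ of length at most $2D(G)$ provided they are traversed consistently, giving $|A_l| + |A_{l+1}| \le 2D(G)$. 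Adding these inequalities over $l = 1, 3, 5, \ldots$ (pairing arcs two at a time) covers the whole cycle with $\lceil b/2 \rceil$ blocks each of total length $\le 2D(G)$, whence $t \le \lceil b/2 \rceil \cdot 2D(G) \le (b+1)D(G)$, and a short rearrangement yields $b \ge t/D(G) - 1$. Since $b$ is an integer this gives $b \ge \lceil t/D(G) \rceil - 1$; to sharpen it to $\lceil t/D(G)\rceil$ I would handle the $b$ even and $b$ odd cases separately exactly as in Theorem \ref{long path vs anyGraph}, using that when $b$ is odd one extra application of Lemma \ref{los extremosCiclos} to the wrap-around triple tightens the count, and when $b$ is even the pairing is exact.

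The main obstacle is the bookkeeping around the cyclic wrap-around: unlike the path case there are no "endpoints" to anchor the argument, so I must be careful that the arcs $A_l$ genuinely partition $C_t$ and that pairing them two-at-a-time via Lemma \ref{los extremosCiclos} does not double-count or miss an arc when $b$ is odd. Concretely, the subtlety is that $d_{C_t}(u_{i_{l+2}}, u_{i_l}) \le 2D(G)$ bounds the \emph{shortest} arc between those two vertices, which might wrap the other way around the cycle; one needs $t \ge $ something (here $t \ge 1$ suffices because if the bound ever failed to be the "forward" arc the conclusion is immediate) to ensure the forward concatenation $A_l \cup A_{l+1}$ is the shorter one, or else to absorb the discrepancy. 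Once that point is pinned down, the inequality $t \le (b - \text{(0 or 1)})D(G) + D(G)$ and the ceiling manipulation finish the proof.
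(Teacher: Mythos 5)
Your overall strategy --- reading off the first components of a local metric basis, partitioning $C_t$ into the $b$ forward arcs $A_1,\dots,A_b$ between consecutive basis positions, and feeding Lemma \ref{los extremosCiclos} into the identity $\sum_l |A_l| = t$ --- is the same skeleton as the paper's proof, and your even case does close correctly: exact pairing of the arcs gives $t\le bD(G)$. The problem is that the two steps you defer are precisely where the content of the theorem lies, and neither goes through as asserted. First, Lemma \ref{los extremosCiclos} bounds $d_{C_t}(u_{i_l},u_{i_{l+2}})=\min\{|A_l|+|A_{l+1}|,\,t-|A_l|-|A_{l+1}|\}$, and your parenthetical claim that ``if the bound ever failed to be the forward arc the conclusion is immediate'' is not justified. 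If instead $t-|A_l|-|A_{l+1}|\le 2D(G)$, all you learn is that the remaining $b-2$ basis positions are crowded into a short arc; for $t$ of moderate size (say $3D(G)<t\le 6D(G)$) this yields neither a contradiction nor the desired count. Something must be argued here; the paper sidesteps the issue by working with explicit sorted indices $i_1=1\le i_2\le\cdots\le i_b$ and chaining the inequalities $i_{l+2}\le i_l+2D(G)$ rather than with arc lengths.

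Second, and more seriously, your main computation only yields $b\ge t/D(G)-1$, i.e.\ $b\ge\left\lceil t/D(G)\right\rceil-1$, one short of the statement, and the sharpening is delegated to ``exactly as in Theorem \ref{long path vs anyGraph}.'' That case analysis does not transfer: in the path setting the odd and even cases are anchored by Lemma \ref{los extremos} (i), namely $i_2\le D(G)+1$ and $i_{b-1}\ge t-D(G)$, which exploit the endpoints $u_1$ and $u_t$ of the path, and a cycle has no such anchors. When $b$ is odd your pairing leaves one unpaired arc that can only be bounded by $2D(G)$, which is exactly the loss of $1$. The paper recovers the full bound in the odd case by a genuinely different device: it uses the wrap-around inequality $d_{C_t}(u_{i_2},u_{i_b})\le 2D(G)$, rewritten as $i_2\le (b+1)D(G)-t+1$, to restart the chain of inequalities at $i_2$ with an improved, $t$-dependent initial value, and then plays the resulting upper bound on $i_{b-1}$ against the lower bound $i_{b-1}\ge t+1-2D(G)$ to obtain $2t\le 2bD(G)$. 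Until you supply a cyclic substitute for the path anchors (or an argument of this wrap-around type), your proof establishes only $\dim_l(C_t\boxtimes G)\ge\left\lceil t/D(G)\right\rceil-1$.
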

 
 \begin{proof} 
 If  $3D(G) \ge t \ge 1$, then $ \left\lceil\frac{t}{D(G)}\right\rceil\le 3$ and, by Theorem \ref{generalUpperBound}, the result follows. From now on we take $t> 3D(G)$. 
 Let   $ u_{i_1},   u_{i_2}, \ldots ,  u_{i_b}$ be the first components of the elements in a local metric basis $B$ of $C_t\boxtimes G$, where $ {i_1}=1\le   {i_2}\le \cdots \le {i_b}$.  First of all, notice that  $t+1-i_{b-1}=d_{C_t}(u_{i_1},u_{i_{b-1}})$ and so Lemma \ref{los extremosCiclos} leads to  $i_{b-1}\ge t+1- 2D(G)$.  We now differentiate two cases.

\vspace{0.2cm}
\noindent Case 1. $b$ even.  In this case $b-1$ is odd and by Lemma \ref{los extremosCiclos}  we have 
$$i_3\le 2D(G)+1, \; i_5\le 4D(G)+1,\; \dots ,\; i_{b-1}\le (b-2)D(G)+1.$$ 
Hence, $ t+1- 2D(G) \le  i_{b-1}\le (b-2)D(G)+1$, so that  $b\ge \frac{t}{D(G)}$.

 \vspace{0.2cm}
\noindent Case 2. $b$ odd.  By Lemma \ref{los extremosCiclos}    we have
$$i_3\le D(G)+1,\;i_4\le 3D(G)+1 ,\;\dots ,\; i_{b}\le (b-1)D(G)+1.$$ 
Now, since $t+i_2-i_b=d_{C_t}(u_{i_2},u_{b})\le 2D(G)$, we have $$i_2\le 2D(G)-t+i_b\le (b+1)D(G)-t+1.$$ Hence, 
$$i_2\le (b+1)D(G)-t+1, \; i_4\le (b+3)D(G)-t+1,\; \dots, \; i_{b-1}\le (2b-2)D(G)-t+1.$$
Thus, $t+1-2D(G)\le i_{b-1}\le (2b-2)D(G)-t+1$, so that  $b\ge \frac{t}{D(G)}$.
 \end{proof}
 
 \begin{theorem}\label{long Cycle vs BipartiteGraphs}
For any connected bipartite graph  $G$ and any integer  $t \ge 4D(G)$,
$$\dim_l(C_t\boxtimes G) \le  \left\lceil\frac{t}{D(G)}\right\rceil+1.$$
Furthermore, if $\left\lceil\frac{t}{D(G)}\right\rceil$ is even,
then 
$$\dim_l(C_t\boxtimes G) = \left\lceil\frac{t}{D(G)}\right\rceil.$$
 \end{theorem}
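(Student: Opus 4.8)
\textbf{Proof proposal for Theorem \ref{long Cycle vs BipartiteGraphs}.}

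The plan is to mimic the construction used in Theorem \ref{long path vs bipartite}, but adapted to the cyclic structure of $C_t$, and then to invoke Theorem \ref{long Cycle vs anyGraph} for the lower bound. First I would fix two diametral vertices $a,b\in V(G)$, set $D=D(G)$, and place ``anchor'' vertices along the cycle at positions $u_1,u_{D+1},u_{2D+1},\dots$, alternating the second coordinate between $a$ and $b$, exactly as in the path case. The cycle has $t$ vertices, so writing $\alpha=\lceil t/D\rceil$, a sequence of $\alpha$ equally spaced anchors at arc-distance $D$ essentially wraps once around $C_t$; the subtlety is that the last gap (between the final anchor and $u_1$) may be shorter than $D$, and that after going once around the parity of the alternating pattern must match up at $u_1$. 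When $\alpha$ is even the alternation $a,b,a,b,\dots$ closes up consistently with $\alpha$ anchors, which is why the clean equality $\dim_l(C_t\boxtimes G)=\lceil t/D\rceil$ holds in that case; when $\alpha$ is odd one extra anchor (hence the ``$+1$'') is needed to repair the parity conflict at the wrap-around point.

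The key tool is again Lemma \ref{resolving triads}: for any three consecutive anchors $(u_p,x),(u_q,y),(u_s,x)$ with $x\ne y$ in $\{a,b\}$, $u_q$ lying on a shortest path between $u_p$ and $u_s$ in $C_t$, and the relevant arc-distances bounded by $D=D(G)$, the triad resolves the ``band'' $V(P)\times V(G)$ where $P$ is the corresponding shortest sub-path of $C_t$. Since $t\ge 4D$, consecutive anchors are at arc-distance exactly $D$ except possibly for the final short gap, so each arc of length $\le D$ between neighboring anchors is genuinely a geodesic in $C_t$ (this uses $D\le t/4 < t/2$). I would then verify that the union of these bands covers all of $V(C_t)\times V(G)$: every edge of $C_t\boxtimes G$ has its $C_t$-projection lying inside one of the arcs between consecutive anchors, so any adjacent pair of vertices is resolved by the triad governing that arc. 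This gives $\dim_l(C_t\boxtimes G)\le \alpha+1=\lceil t/D\rceil+1$ in general, and when $\alpha$ is even the alternating pattern closes up with only $\alpha$ anchors, giving $\dim_l(C_t\boxtimes G)\le\lceil t/D\rceil$.

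For the ``furthermore'' clause, the lower bound $\dim_l(C_t\boxtimes G)\ge\lceil t/D\rceil$ is immediate from Theorem \ref{long Cycle vs anyGraph} (which holds for all connected $G$, in particular bipartite ones), so combining it with the upper bound from the even case yields equality. The main obstacle I anticipate is the bookkeeping around the wrap-around: one must choose the anchor positions and their second coordinates so that (a) three consecutive anchors always form a valid triad for Lemma \ref{resolving triads} --- in particular the middle anchor must lie on a geodesic joining the outer two in $C_t$, which forces the spacing control $t\ge 4D$ --- and (b) the alternation $a,b,a,\dots$ is consistent when read cyclically, which is exactly where the even/odd dichotomy enters. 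Care is also needed for the band containing $u_1$, since its two neighboring arcs may have unequal lengths (one of length $D$, the possibly-short last one), but because both lengths are $\le D\le D(H)$-type bounds one checks the hypotheses of Lemma \ref{resolving triads} still apply with the roles of $u_1,u_2,u_3$ assigned appropriately. Everything else is a routine verification that the covering arcs exhaust $C_t$.
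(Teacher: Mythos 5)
Your proposal follows essentially the same route as the paper's proof: anchors placed at $u_1,u_{D(G)+1},u_{2D(G)+1},\dots$ with second coordinates alternating between two diametral vertices $a,b$, an extra anchor when $\left\lceil t/D(G)\right\rceil$ is odd to fix the parity mismatch at the wrap-around, Lemma~\ref{resolving triads} applied to consecutive triads to resolve each band, and Theorem~\ref{long Cycle vs anyGraph} for the matching lower bound in the even case. The only detail left implicit --- where exactly the extra anchor sits in the odd case (the paper places it at the same cycle position $u_{(\alpha-1)D(G)+1}$ with the other diametral vertex) --- is routine bookkeeping of the kind you already flag, so the argument is sound.
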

 
 \begin{proof}
Let $G$ and $C_t$ be as in the hypotheses.  From $\alpha=\left\lceil \frac{t}{D(G)}\right\rceil$ and two diametral vertices  $a,b\in V(G)$
we define a set $B_{\alpha}$ as follows.
If $\alpha $ is even, then
$$B_{\alpha}=\{(u_1,a), (u_{D(G)+1},b),(u_{2D(G)+1},a),(u_{3D(G)+1},b),\dots,(u_{(\alpha-1) D(G)+1},b)\}$$ 
and, if $\alpha$  is odd, then 
$$B_{\alpha}=\{(u_1,a), (u_{D(G)+1},b),(u_{2D(G)+1},a),(u_{3D(G)+1},b),\dots,(u_{(\alpha-1) D(G)+1},a),(u_{(\alpha-1) D(G)+1},b)\}.$$

Notice that  $|B_{\alpha}|=\alpha$, for $\alpha $  even, and $|B_{\alpha}|=\alpha+1$, for $\alpha $ odd.  We will show that 
$B_{\alpha}$ is a local metric generator for $C_t\boxtimes G$.
In order to see that, let $(u_i,v_j)$, 
$(u_k,v_l)$ be a pair of adjacent   vertices belonging to  $V(C_t\boxtimes G)- B_{\alpha}$. We consider,  without lost of generality, that  $i\le k$ and we differentiate  the following three cases for $k$.

\begin{itemize}
\item $2\le k\le D(G)+1$. Let $T_1=\{u_1,\dots ,u_{D(G)+1}\}\times V(G)$. 
In this case   $(u_i,v_j),(u_k,v_l)\in T_1$
 and, by Lemma \ref{resolving triads} the set $\{(u_1,a),(u_{D(G)+1},b),(u_{2D(G)+1},a)\}\subset B_{\alpha}$
resolves  $T_1$.

\item $pD(G)+2\le k\le (p+1)D(G)+1$, for some integer $p\in \{1,...,\alpha -2\}$.  
Let $T_p=\{u_{pD(G)+1},\dots ,u_{(p+1)D(G)+1}\}\times V(G)$. 
In this case   $(u_i,v_j),(u_k,v_l)\in T_p$
 and we can take $x,y\in \{a,b\}$ such that $X_p=\{(u_{(p-1)D(G)+1 },x),(u_{pD(G)+1},y),(u_{(p+1)D(G)+1},x)\} $ is a subset of $B_{\alpha}$. Thus,  by Lemma \ref{resolving triads} we can conclude that $X_p$
resolves  $T_p$.

\item $(\alpha-1) D(G)+2\le k\le t+1$. 
Let $T_{t}=\{u_{(\alpha-1) D(G)+1},\dots ,u_{t+1}\}\times V(G)$. 
In this case,   $(u_i,v_j),(u_k,v_l)\in T_t$
 and we take  the set $X_{t}=\{(u_{(\alpha -1)D(G)+1},b),(u_{1},a), (u_{D(G)+1},b)\}\subset B_{\alpha}$. By Lemma \ref{resolving triads} we can conclude that   $X_t$
resolves  $T_t$.
\end{itemize}

According to the three  cases above $B_{\alpha}$ is a local metric generator for $C_t\boxtimes G$ and so $\dim_l(C_t\boxtimes G) \le |B_{\alpha}|$.
 Therefore, by Theorem \ref{long Cycle vs anyGraph} we conclude the proof.
\end{proof}

\end{document}